\let\oldmarginpar\marginpar
\renewcommand\marginpar[1]{\-\oldmarginpar[\raggedleft\footnotesize #1]%
{\raggedright\footnotesize #1}}
\newtheorem{theorem}{Theorem}[section]
\newtheorem*{theorem*}{Theorem}
\newtheorem{lemma}[theorem]{Lemma}
\theoremstyle{definition}
\newtheorem{definition}[theorem]{Definition}
\newtheorem{example}[theorem]{Example}
\newtheorem{remark}[theorem]{Remark}
\newcommand{\Z}{\mathbb{Z}}
\newcommand{\N}{\mathbb{d}}
\newcommand{\R}{\mathbb{R}}
\newcommand{\T}{\mathbb{T}}
\def\L{\Lambda}
\def\l{\lambda}
\def\T{\mathbb{T}}
\def\N{\mathbb{N}}
\def\Z{\mathbb{Z}}
\def\R{\mathbb{R}}
\def\R{\mathbb{R}}
\def\1{\mathbf{1}}
\title{On uniformly minimal and 'uniformly complete' exponential systems}
\author{Shahaf Nitzan}
\address{Georgia Institute of Technology, Atlanta, USA}
\email{shahaf.nitzan@math.gatech.edu}
\thanks{The author is supported by NSF CAREER grant DMS 1847796}
\begin{document}

\begin{abstract}
A.~Olevskii and A.~Ulanovskii obtain in \cite{OU2010} a scale of density results, which correspond to how well an exponential system approximates a uniformly minimal system over a compact set. We extend their result in several directions. First, we show that it holds for any set of positive finite measure. Next, we consider a relaxed version of frames, which we term 'uniformly complete systems', and obtain an analogues scale of density results for such systems.
\end{abstract}

\maketitle

\section{introduction}

\subsection{}

Let $S\subseteq\R$ be of positive finite measure, and let $\L\subseteq\R$ be uniformly discrete, that is,
\begin{equation}\label{unif discrete}
|\l-\gamma|\geq \delta\qquad \forall\l\neq\gamma\in\L,
\end{equation}
where $\delta$ is a positive constant. Denote \[e_{\l}(t)=e^{2\pi i \l t},\] and consider the exponential system
\[
E(\L):=\{e_{\l}\}_{\l\in\L}
\]
in the space $L^2(S)$.

We say that
$E(\L) $
is a \textit{Riesz sequence} in the space if there exist positive constants $A$ and $B$ so that
\[
A\sum_{\l\in\L}|a_{\l}|^2\leq \|\sum_{\l\in\L} a_{\l}e_{\l}\|^2_{L^2(S)}\leq B\sum_{\l\in\L}|a_{\l}|^2\qquad \forall\{a_{\l}\}\in\ell^2(\L).
\]

It is well known that if $E(\L)$ is a Riesz sequence in $L^2(S)$ then it is \textit{uniformly minimal} there, or equivalently that there exists a sequence $\{g_{\l}\}_{\l\in\L}\subseteq L^2(S)$, with $\sup\|g_{\l}\|_{L^2(S)}< \infty$, which satisfies:
\begin{equation}\label{delta condition}
\langle  g_{\l}, e_{\gamma}\rangle_{L^2(S)} =\delta_{\l}(\gamma)\qquad \forall\l,\gamma\in\L,
\end{equation}
where $\delta_{\l}(\gamma)=1$ when $\l=\gamma$ and $\delta_{\l}(\gamma)=0$ otherwise.

In general, a uniformly minimal exponential system need not be a Riesz sequence: An example is well known, and discussed in Subsection \ref{subsection 3.2}. The notion of 'uniform minimality' may therefore be considered as an intermediate notion, between that of 'minimal systems' and that of 'Riesz sequences'.

\subsection{}\label{section 1.2} We say that the exponential system $E(\L)$ is a \textit{frame} in $L^2(S)$ if there exist positive constants $A$ and $B$ so that
\[
A\|f\|_{L^2(S)}^2 \leq \sum_{\l\in\L}|\langle f, e_{\l}\rangle_{L^2(S)}|^2\leq B\|f\|_{L^2(S)}^2\qquad \forall f\in L^2(S).
\]

Frames and Riesz sequences are traditionally considered dual notions, and their definitions correspond to the actions of dual operators. This duality is realized in several well known results, see e.g. \cite{MM2010}, \cite{RS1997}. These systems represent the dual problems of sampling and interpolation, as is discussed in more detail in e.g. \cite{OUbook}, \cite{Sbook} (we will not use this terminology in the current note).

It follows from the definition that if $E(\L)$ is a frame in $L^2(S)$ then it is complete in the space. In fact, it is well known that if $S$ is bounded, then $E(\L)$ is a frame in the space if and only if every $f\in L^2(S)$ admits a decomposition $f=\sum a_{\l}(f)e_{\l}$ with
\begin{equation}\label{frame decomposition}
\sum_{\l\in\L} |a_{\l}(f)|^2\leq C\|f\|_{L^2(S)}^2,
\end{equation}
where $C$ is a positive constant. (Note that Lemma {\ref{bessel lemma}} was applied for this equivalence. It remains true also for unbounded sets under the added condition that the conclusion of Lemma {\ref{bessel lemma}} holds.)

We are not aware of a notion from the literature which serves as intermediate between 'complete systems' and 'frames', and which may provide a good analog to the notion of 'uniform minimality'. One goal of this note is to suggest such a notion for exponential systems (see Subsection \ref{subsection 1.3}).

\subsection{} Intuitively, one may assume that if $E(\L)$ is a Riesz sequence then $\L$ cannot be too dense, while if it is a frame then $\L$ cannot be too sparse. This intuition was conjectured by A.~Beurling (who studied in detail the case where $S$ is an interval) and confirmed by H.~Landau, \cite{Lan1967}.

In his classical theorem, Landau proves that if $S$ is a bounded set of positive measure and $E(\L)$ is a Riesz sequence in $L^2(S)$ then
\begin{equation}\label{landau riesz}
D^+(\L):= \lim_{r\rightarrow\infty} \frac{\max_{x\in\R}\sharp\{\L\cap [x,x+r]\}}{r}\leq |S|,
\end{equation}
while if $E(\L)$ is a frame in the space then
\begin{equation}\label{landau frame}
D^-(\L):= \lim_{r\rightarrow\infty} \frac{\min_{x\in\R}\sharp\{\L\cap [x,x+r]\}}{r}\geq |S|.
\end{equation}
Since its publication in 1967, Landau's theorem inspired a variety of research in the theory of sampling and interpolation, both in this and in other settings.

 In the context of this note, the works \cite{NO2012} and \cite{OU2010} are of interest. In \cite{NO2012}, A.~Olevskii and the author show that the requirement that $S$ is bounded may be removed, and that the weaker condition of $E(\L)$ being uniformly minimal is enough to imply the density restriction (\ref{landau riesz}). (This is no longer true if $S$ is merely minimal, see \cite{OU2010}).

In \cite{OU2010}, A.~Olevskii and A.~Ulanovskii obtain a scaled version of the latter result, which they show to hold when $S$ is compact. Our first goal is to extend their result to general sets $S$ of positive finite measure. To give an explicit formulation we need the following.

\begin{definition}\label{app unif min}

Fix $0< d< 1$. Let $S\subseteq\R$ be of positive finite measure and let $\L\subseteq\R$ be a uniformly discrete sequence. We say that $E(\L)$ is $d$-\textit{approximately uniformly minimal} in $L^2(S)$ if there exist $\{g_{\lambda}\}_{\lambda\in\Lambda}\subseteq L^2(S)$, with $\sup\|g_{\lambda}\|_{L^2(S)}<\infty$, so that
\begin{equation}\label{unif-min-up-to-d}
\|\widehat{g}_{\l}-\delta_{\lambda}\|_{\ell^2(\L)}\leq d\qquad \forall\l\in\L,
\end{equation}
where $\widehat{g}_{\l}(\gamma)= \langle  g_{\l}, e_{\gamma}\rangle_{L^2(S)}$ is the Fourier transform of $g_{\l}$, restricted to $\L$.
\end{definition}

Note that if the conditions of Definition \ref{app unif min} hold with $d=0$, then the system is uniformly minimal. With this definition the estimate (\ref{landau riesz}) may be quantified. We prove the following in Section \ref{section 2}.

\begin{theorem}\label{d-almos-unif-min}
Fix $0< d< 1$. Let $S\subseteq\R$ be of positive finite measure and let $\L\subseteq\R$ be a uniformly discrete sequence. If $E(\L)$ is $d$-approximate uniformly minimal in $L^2(S)$ then
\[
D^+(\L)\leq \frac{|S|}{1-d^2}.
\]
\end{theorem}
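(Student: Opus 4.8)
The plan is to run Landau's density argument in the quantitative form of Olevskii--Ulanovskii \cite{OU2010}, combined with the device of Nitzan--Olevskii \cite{NO2012} that removes the boundedness of $S$, carrying the parameter $d$ through all the estimates. First I would reduce to a local counting statement: it suffices to show that for every $\eps>0$ there is $r_\eps$ with $\#(\L\cap I)\le\frac{|S|}{1-d^2}\,|I|+\eps|I|$ for every interval $I$ with $|I|\ge r_\eps$, since then letting $|I|\to\infty$ yields the bound on $D^+(\L)$. Fix such an $I$, set $\L_I:=\L\cap I$, $N:=\#\L_I$, and let $J$ be the interval concentric with $I$ of length $|I|+2\rho$, with $\rho$ a large parameter sent to $\infty$ after $|I|$. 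Let $P_J$ be the orthogonal projection of $L^2(\R)$ onto $\{h:\supp\ft{h}\subseteq J\}$, let $M_S$ be multiplication by $\1_S$, and put $C:=M_SP_JM_S$. Then $0\le C\le\mathrm{Id}$, $C$ is trace class with $\operatorname{tr}C=|J|\,|S|$, and for every $\l\in I$
\[
\langle C(e_\l\1_S),e_\l\1_S\rangle=\int_{J}|\ft{\1_S}(\xi-\l)|^2\,d\xi\ \ge\ |S|-\eta(\rho),\qquad \eta(\rho):=\int_{|\xi|>\rho}|\ft{\1_S}|^2 .
\]
Since $\1_S\in L^2(\R)$ — and this is the only place where $|S|<\infty$, rather than $S$ bounded, is used — we have $\eta(\rho)\to0$ as $\rho\to\infty$.

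\emph{The main comparison.} Put $V_I:=\operatorname{span}\{e_\l\1_S:\l\in\L_I\}\subseteq L^2(S)$; because a nonzero exponential polynomial cannot vanish on a set of positive measure, $\dim V_I=N$. The heart of the proof is the inequality
\[
(1-d^2)\,N\ \le\ \operatorname{tr}\!\big(C\,P_{V_I}\big)+\Littleoh{\abs{J}}\ \le\ \operatorname{tr}C+\Littleoh{\abs{J}}=|J|\,|S|+\Littleoh{\abs{J}},
\]
where the second inequality is immediate from $0\le C\le\mathrm{Id}$ and $\operatorname{tr}(CP_{V_I})\le\operatorname{tr}C$. For the first inequality I would argue as in \cite{OU2010,NO2012}: the functions $g_\l$ ($\l\in\L_I$, extended by $0$ off $S$) satisfy $\sup_\l\|g_\l\|\le M$ and $\|\ft{g}_\l-\delta_\l\|_{\ell^2(\L)}\le d$, and one uses them, together with the Bessel bound for $E(\L)$ in $L^2(S)$ — available from Lemma \ref{bessel lemma} with a constant depending only on $\delta$ and $|S|$ — and with Bernstein/Plancherel--P\'olya-type estimates, to control the cancellation among the $e_\l\1_S$ and to push $V_I$ into the top eigenspace of $C$ up to a defect measured by $d$. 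The factor $1-d^2$ should enter here in the expected, Pythagorean, way: each $g_\l$ captures at least a $(1-d^2)$-fraction of $e_\l\1_S$ relative to the span, the complementary error being essentially orthogonal and of squared size $\le d^2$; replacing $e_\l\1_S$ by $P_J(e_\l\1_S)$ costs $\le M\sqrt{\eta(\rho)}$ per index, which is absorbed into $\Littleoh{\abs{J}}$ once $\rho$ is large.

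\emph{Conclusion.} Dividing by $|I|$ and letting $|I|\to\infty$ with $\rho=\rho(|I|)\to\infty$ and $\rho/|I|\to0$, one has $|J|/|I|\to1$ and the $\Littleoh{\abs{J}}$ terms vanish, so $\limsup_{|I|\to\infty}\#(\L\cap I)/|I|\le|S|/(1-d^2)$, that is $D^+(\L)\le|S|/(1-d^2)$.

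\emph{Main obstacle.} The delicate step is the first inequality of the main comparison. It is a genuine Landau-type statement — an $N$-dimensional space spanned by exponentials that are ``almost biorthogonalizable'', hence almost orthogonal, cannot be squeezed into the effective range (of trace $\approx|I|\,|S|$) of the concentration operator $C$ beyond the slack $1/(1-d^2)$ — and its sharp form does not follow from a naive triangle inequality, since linear combinations of the $e_\l\1_S$ can have much fatter frequency tails than the individual terms; controlling this is precisely what the $g_\l$ and the uniform Bessel bound are for. A secondary but essential point is that, $S$ being only of finite measure, the argument of \cite{OU2010} cannot be quoted verbatim (it exploits compact support of the $g_\l$); the substitute, following \cite{NO2012}, is the Bessel bound of Lemma \ref{bessel lemma} together with the $L^2$-decay of $\ft{\1_S}$, which together keep all error terms $\Littleoh{\abs{J}}$ uniformly in $I$.
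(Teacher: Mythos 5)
Your setup --- the reduction to interval counts, the concentration operator $C=M_SP_JM_S$ with $\operatorname{tr}C=|J|\,|S|$, and the use of $\widehat{1\!\!1}_S\in L^2(\R)$ in place of boundedness of $S$ --- is sound and close in spirit to the paper. But the first inequality of your ``main comparison,'' namely $(1-d^2)N\le \operatorname{tr}(CP_{V_I})+o(|J|)$, is the entire content of the theorem, and you do not prove it; you offer only the heuristic that each $g_\l$ ``captures a $(1-d^2)$-fraction of $e_\l 1\!\!1_S$'' in a Pythagorean way. That heuristic is not correct as stated: the hypothesis $\|\widehat g_\l-\delta_\l\|_{\ell^2(\L)}\le d$ is a condition on the matrix of inner products $\langle g_{\lambda_j},e_{\lambda_k}\rangle$, not a statement that the functions $e_\l 1\!\!1_S$ are almost orthogonal or that $g_\l$ is quantitatively close to a dual element of $e_\l 1\!\!1_S$ inside $V_I$; approximate biorthogonalizability does not yield approximate orthogonality, and, as you yourself note, elements of $V_I$ can have much fatter frequency tails than the individual generators, so there is no direct route from the individual bounds to a bound on $\operatorname{tr}(CP_{V_I})$ for the \emph{full} span $V_I$.

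The missing ingredient is the finite-dimensional lemma of Olevskii--Ulanovskii (Lemma \ref{lin-algebra-lemma}): since $\|B_I-\mathrm{id}_{N}\|_{HS}^2\le d^2N$ for $B_I=(\langle g_{\lambda_j},e_{\lambda_k}\rangle)$, there is a subspace $L$ of coefficient space of dimension at least $(1-\gamma^2d^2)N-1$ on which $\|B_Ia\|\ge(1-1/\gamma)\|a\|$. The paper then works not with $V_I$ but with the corresponding subspace $W\subseteq V_I$ of exponential sums, on which one has the lower Riesz-type bound $(1-1/\gamma)^2\sum_k|a_k|^2\le\sum_j|\langle g_{\lambda_j},w\rangle|^2$; it is $\dim W\ge(1-\gamma^2d^2)N-1$ that gets compared with $|S|(|I|+2b)$ via the identity $\int_\R\|P_We_x\|^2\,dx=\dim W$ (Lemma \ref{children}), which plays the role of your trace computation. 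This lower bound is also exactly what makes the tail estimate run: one writes $\|P_We_x\|^2\le\bigl(\sum_k|a_k(x)|^2\bigr)^{1/2}\bigl(\sum_k|h(x-\lambda_k)|^2\bigr)^{1/2}$ with $h=\widehat{1\!\!1}_S$, uses the Riesz-type bound to replace $\sum_k|a_k(x)|^2$ by $\sum_j|\langle g_{\lambda_j},P_We_x\rangle|^2$, and then integrates, controlling the first factor by $\|g_{\lambda_j}\|\le M$ via Parseval and the second by your $\eta(\rho)$; without the restriction to $W$ this step has no starting point. So the architecture you propose can be completed, but only after inserting the subspace-extraction step, and the quantity that ends up bounded is $\dim W$, not $(1-d^2)\dim V_I$ directly; letting $\gamma\to1$ at the end recovers the stated constant.
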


As mentioned above, the case where $S$ is compact was treated by Olevskii and Ulanovskii in \cite{OU2010}. Moreover, they show in \cite{OU2010} that this result is sharp in the sense that for every $0< d< 1$ there exists a sequence $\L$, so that $E(\L)$ is $d$-approximate uniformly minimal in $L^2[0,1]$ and $D^+(\L)= 1/({1-d^2})$.

\subsection{}\label{subsection 1.3} We may summarize the preceding discussion as follows: Landau obtained the estimate (\ref{landau riesz}) for Riesz sequences, it turns out that this estimate holds for uniformly minimal systems, but not for systems which are merely minimal. Moreover, the relation between the density restriction and the approximation of uniform minimality may be scaled.

Next, we ask whether the dual density restriction (\ref{landau frame}) admits analogous results. It is well known that condition (\ref{landau frame}) is not necessarily implied by the weaker requirement that $E(\L)$ is complete. Results go back to H.~Landau \cite{Lan64}, see also e.g. \cite{NO2007}.

As mentioned in Subsection \ref{section 1.2}, we are not familiar with a notion in the literature which may serve as an intermediate notion between 'complete systems' and 'frames', in the same way that the notion of 'uniform minimality' serves between 'minimal systems' and 'Riesz sequences'. For exponential systems, we propose to consider the following notion.

\begin{definition}\label{def unif comp}

Let $S\subseteq\R$ be of positive finite measure and let $\L\subseteq\R$ be a uniformly discrete sequence. We say that $E(\L)$ is \textit{uniformly complete} in $L^2(S)$ if for every $w\in\R$, and every $\epsilon>0$, there exists a linear combination
\[
p_w=\sum_{\l\in\L} a_{\l}(w)e_{\l},
\]
 so that
$
\|e_{w}-p_w\|_{L^2(S)}< \epsilon
$
and
\[
\sum_{\l\in\L} |a_{\l}(w)|^2\leq C,
\]
where $C$ is a positive constant which does not depend on $w$ or on $\epsilon$.
\end{definition}

Observe that this definition relaxes the frame condition in the sense that the decomposition and the estimate in (\ref{frame decomposition}) are required to hold only for exponential functions.

Clearly, if an exponential system is uniformly complete in $L^2(S)$ then it is complete there. On the other hand, in Section \ref{section 3} we show that a uniformly complete system need not be a frame, and so may indeed be considered as an intermediate notion. Further, we motivate in that section the definition of uniformly complete systems as analogs of uniformly minimal systems via a well known duality argument.

It turns out that uniform completeness plays a role in the study of the density restriction  (\ref{landau frame}), which is similar to the role that uniform minimality plays in the study of (\ref{landau riesz}). Explicitly, we have the following.

\begin{theorem}\label{thm unif complete}
Let $S\subseteq\R$ be of positive finite measure and let $\L\subseteq\R$ be a uniformly discrete sequence. If $E(\L)$ is uniformly complete in $L^2(S)$ then
\[
D^-(\L)\geq |S|.
\]
\end{theorem}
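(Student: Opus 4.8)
The plan is to mimic Landau's comparison-of-dimensions argument, exploiting the uniform completeness hypothesis to build, for a prescribed interval, a well-behaved ``approximate sampling'' operator whose rank we can control. Fix a large parameter $r$ and a point $x_0\in\R$ realizing (approximately) the minimum in the definition of $D^-(\L)$; our aim is a lower bound of the form $\sharp\{\L\cap[x_0,x_0+r]\}\geq |S|r - o(r)$. To this end, consider the space $W_r$ of $L^2(S)$-functions whose Fourier transforms are (essentially) concentrated on the frequency interval $I_r=[x_0,x_0+r]$; by the classical prolate-spheroidal/Landau heuristic, any reasonable finite-dimensional model of $W_r$ has dimension $\geq |S|r - o(r)$. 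The first step is to make this precise: introduce the concentration operator $f\mapsto P_S Q_{I_r} f$ (time-limiting to $S$, band-limiting to $I_r$), and recall from Landau's theory that the number of its eigenvalues exceeding, say, $1/2$ is at least $|S|r - o(r)$ as $r\to\infty$. Call $V_r$ the span of those eigenfunctions, so $\dim V_r \geq |S|r - o(r)$ and every $f\in V_r$ is well-concentrated both on $S$ and, on the Fourier side, on $I_r$.

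The second step is to use uniform completeness to show that $V_r$ is, up to a small error, contained in the span of $\{e_\l : \l\in\L\cap I_r^{\,*}\}$, where $I_r^{\,*}$ is a slightly enlarged interval $[x_0-\eta r,\ x_0+(1+\eta)r]$. Here is the mechanism: for each $f\in V_r$, write $f$ as a superposition (integral) of exponentials $e_w$ with $w$ ranging over $I_r$, weighted by $\widehat f$; apply Definition \ref{def unif comp} to replace each $e_w$ by $p_w=\sum_\l a_\l(w)e_\l$ with $\|e_w-p_w\|_{L^2(S)}<\epsilon$ and $\sum_\l|a_\l(w)|^2\leq C$. Integrating, one obtains an operator $T_r\colon V_r\to \ell^2(\L)$, $T_r f = \big(\int a_\l(w)\widehat f(w)\,dw\big)_\l$, such that $\big\|f - \sum_\l (T_r f)_\l e_\l\big\|_{L^2(S)}$ is small (controlled by $\epsilon$ and $\|\widehat f\|_{L^1(I_r)}$, the latter bounded by $\sqrt r\,\|f\|$), while $\|T_r f\|_{\ell^2(\L)}^2$ is bounded in terms of $C$ and $\|\widehat f\|_{L^1}^2$. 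The key point to extract is that the ``mass'' of $T_r f$ lying on frequencies $\l$ far outside $I_r$ is negligible: this is exactly the band-limiting localization that one also needs in Landau's original proof, and it should follow by the same kind of estimate — because $\widehat f$ is concentrated on $I_r$ and $S$ has finite measure, the pairing $\langle f,e_\l\rangle$ decays appropriately off $I_r$, so that truncating $T_r f$ to the index set $\L\cap I_r^{\,*}$ changes it by $o(\|f\|)$ provided $r$ is large and $\eta$ is chosen (then sent to $0$ at the end).

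The third step is the dimension count. Composing: the map $f\mapsto \sum_{\l\in\L\cap I_r^{\,*}}(T_r f)_\l e_\l$ sends $V_r$ into the $\sharp\{\L\cap I_r^{\,*}\}$-dimensional space $\mathrm{span}\{e_\l:\l\in\L\cap I_r^{\,*}\}\subseteq L^2(S)$, and by Steps 1--2 it is close to the identity on $V_r$ — specifically, $\|f - (\text{image})\|_{L^2(S)} \leq \theta\|f\|_{L^2(S)}$ for some $\theta<1$ once $\epsilon$ (hence $\theta$) is taken small. A map that moves every vector of a subspace by less than its own length is injective on that subspace, hence $\sharp\{\L\cap I_r^{\,*}\}\geq \dim V_r \geq |S|r - o(r)$. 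Dividing by $r$, letting $r\to\infty$, and then letting $\eta\to0$ (noting $\sharp\{\L\cap I_r^{\,*}\}\leq \sharp\{\L\cap[x_0,x_0+r]\} + 2\eta r/\delta + O(1)$ by uniform discreteness, and taking the infimum over $x_0$) yields $D^-(\L)\geq |S|$.

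The main obstacle I anticipate is Step 2 — converting the \emph{pointwise-in-$w$} decomposition guaranteed by uniform completeness into a \emph{uniformly bounded operator} $T_r$ on the whole subspace $V_r$, together with the off-$I_r$ decay estimate. Two technical worries must be dispatched: first, measurability and integrability of $w\mapsto a_\l(w)$ (one may need to select the $a_\l(w)$ in a measurable way, or argue by discretizing $w$ over a fine net in $I_r$ and using uniform discreteness of $\L$ plus the uniform bound $C$ to pass to the limit); second, the fact that the error $\epsilon$ in $\|e_w-p_w\|_{L^2(S)}<\epsilon$ accumulates when integrated against $\widehat f$, so one must track that $\|\widehat f\|_{L^1(I_r)}\lesssim\sqrt r\|f\|_{L^2}$ and choose $\epsilon=\epsilon(r)\to 0$ fast enough that $\epsilon(r)\sqrt r\to 0$ — this is legitimate because $\epsilon$ in Definition \ref{def unif comp} is arbitrary with $C$ fixed. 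Handling the unboundedness of $S$ (as opposed to the compact case) will, as in \cite{NO2012}, require the auxiliary Bessel-type bound referenced as Lemma \ref{bessel lemma} in the paper, which guarantees the upper frame-type inequality needed to make the truncation-to-$\L\cap I_r^{\,*}$ step quantitative.
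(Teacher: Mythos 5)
Your overall architecture (a Landau-type comparison of dimensions on a concentration subspace $V_r$) is reasonable in spirit, but Step 2 contains a genuine gap that the rest of the argument does not survive. You justify truncating $T_rf$ to $\Lambda\cap I_r^{\,*}$ by appealing to the off-$I_r$ decay of the pairings $\langle f,e_\lambda\rangle$; but $(T_rf)_\lambda=\int a_\lambda(w)\widehat f(w)\,dw$ is built from the coefficients $a_\lambda(w)$ supplied by Definition \ref{def unif comp}, and these are \emph{not} the canonical Fourier pairings. The definition controls only $\sum_\lambda|a_\lambda(w)|^2\le C$ and the $L^2(S)$ error; it imposes no localization whatsoever on which frequencies $\lambda$ carry the mass of $p_w$, so for all you know every $p_w$, $w\in I_r$, uses frequencies far outside $I_r^{\,*}$. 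Consequently the tail $\sum_{\lambda\notin I_r^{\,*}}(T_rf)_\lambda e_\lambda$ cannot be shown to be $o(\|f\|)$: the only available estimate (synthesis bound of Lemma \ref{bessel lemma} after reducing to bounded $S$, together with $\|T_rf\|_{\ell^2}\lesssim\sqrt{Cr}\,\|f\|$) is of order $\sqrt r\,\|f\|$, which destroys the near-identity property on $V_r$; and without truncation the dimension count compares $\dim V_r$ with all of $\Lambda$ and yields nothing. Fixing this would require a ``local'' form of uniform completeness (approximation of $e_w$ with coefficients supported in $\Lambda\cap[w-R,w+R]$, uniformly in $w$), which is a genuinely stronger statement and does not follow from the definition; averaging tricks (writing $e_w$ on $S$ as $\int\widehat\chi(v-w)e_v\,dv$ and substituting $p_v$) preserve the $\ell^2$ bound but do not localize the coefficients either. (A smaller, fixable point: $f\in V_r$ is not band-limited to $I_r$, so your integral representation of $f$ over $I_r$ has a remainder of norm comparable to $\sqrt{1-\text{threshold}}\,\|f\|$; taking the eigenvalue threshold close to $1$ rather than $1/2$ keeps this inside the budget $\theta<1$.)

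This localization problem is exactly what the paper sidesteps by dualizing instead of sampling: Theorem \ref{thm unif complete} is obtained (Remark \ref{d=0}) as the $d\to0$ limit of Theorem \ref{d-almos-unif-comp}, whose proof reduces to $S\subseteq[0,1]$, perturbs $\Lambda$ onto the lattice $\eta\Z$ (Lemma \ref{stability}, with the Bessel bound of Lemma \ref{bessel lemma}), and converts uniform completeness of $E(\Lambda)$ over $S$ into approximate uniform minimality of the complementary system $E(\Gamma_\eta)$ over $I_\eta\setminus S$; the density bound then follows from Theorem \ref{d-almos-unif-min}, where the needed localization is supplied by the decay of $\widehat{1\!\!1}_S$ and the projection identity of Lemma \ref{children}, not by any localization of the coefficients $a_\lambda(w)$. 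As it stands, your Step 2 is the missing idea, and I do not see how to complete it without essentially importing such a duality argument.
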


Next we consider the following quantified version of uniform completeness.

\begin{definition}

Fix $0< d< 1$. Let $S\subseteq\R$ be of positive finite measure and let $\L\subseteq\R$ be a uniformly discrete sequence. We say that $E(\L)$ is \textit{$d$-approximately uniformly complete} in $L^2(S)$ if for every $w\in\R$ there exists a linear combination
\[
p_w=\sum_{\l\in\L} a_{\l}(w)e_{\l},
\]
 so that
$
\|e_{w}-p_w\|_{L^2(S)}\leq d\sqrt{S}
$
and
\[
\sum_{\l\in\L}|a_{\l}(w)|^2\leq C,
\]
where $C$ is a positive constant which does not depend on $w$.
\end{definition}

With this definition we show that Theorem \ref{thm unif complete} admits a scaled version, analogous to the scaled version in Theorem \ref{d-almos-unif-min}.

\begin{theorem}\label{d-almos-unif-comp}
Fix $0< d< 1$. Let $S\subseteq\R$ be of positive finite measure and let $\L\subseteq\R$ be a uniformly discrete sequence. If $E(\L)$ is $d$-approximately uniformly complete in $L^2(S)$ then
\[
D^-(\L)\geq {|S|}({1-d^2}).
\]
This result is sharp (up to the end point) in the sense that for every $0< d< 1$ and for every $\epsilon>0$ there exists a set $S$ so that $E(\Z)$ is $d$-approximate uniformly complete in $L^2(S)$ and
$1-\epsilon\leq {|S|}({1-d^2})$.
\end{theorem}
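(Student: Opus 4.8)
The plan is to deduce the density estimate by a duality argument that reduces $d$-approximate uniform completeness of $E(\L)$ in $L^2(S)$ to $d$-approximate uniform minimality of a related exponential system, so that Theorem \ref{d-almos-unif-min} may be applied, and then to produce the sharpness example by an explicit construction with $\L=\Z$. First I would set up the duality: if $p_w=\sum_{\l\in\L}a_\l(w)e_\l$ satisfies $\|e_w-p_w\|_{L^2(S)}\le d\sqrt{|S|}$ with $\sum|a_\l(w)|^2\le C$, then for a sequence $\L'$ which is a candidate Riesz-sequence complement one wants to test against $e_{w}$ for $w$ ranging over $\L'$. The natural move is to let $w$ run over $\Z$ (or, more precisely, to work in a periodization/Toeplitz framework): one considers the Gram-type operator on $\L$ and uses the bound on $\sum|a_\l(w)|^2$ to control the adjoint. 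Concretely, for each $w$ the function $g_w:=\overline{P}$ built from the coefficients $a_\l(w)$ — i.e. the element of $L^2(S)$ whose Fourier coefficients on $\L$ are $\overline{a_\l(w)}$ — plays the role of the biorthogonal-type functional, and $\langle g_w,e_\gamma\rangle_{L^2(S)}=\langle p_w,e_\gamma\rangle$ approximates $\langle e_w,e_\gamma\rangle$; one then checks that $\|\widehat{g_w}-\delta_w\|_{\ell^2}$ is controlled by $\|e_w-p_w\|_{L^2(S)}/\sqrt{\text{(lower Bessel-type bound)}}$, which is where the factor $d$ (and the normalization $d\sqrt{|S|}$) enters. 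Equivalently, one may phrase this as: $E(\L)$ being $d$-approximately uniformly complete in $L^2(S)$ forces $E(\Z\setminus\L)$ (or an appropriate "gap" system) to be $d$-approximately uniformly minimal in $L^2$ of a set of measure $1-|S|$, and $D^-(\L)\ge 1-D^+(\Z\setminus\L)\ge 1-\frac{1-|S|}{1-d^2}$; a short computation turns this into $D^-(\L)\ge|S|(1-d^2)$ — here one should be careful whether the clean inequality needs $|S|\le 1$ or a limiting/rescaling argument to reduce to that case.

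For the sharpness statement I would adapt the Olevskii–Ulanovskii construction that makes $E(\L)$ $d$-approximately uniformly minimal in $L^2[0,1]$ with $D^+(\L)=1/(1-d^2)$, and pass to the complementary picture. Fix $d$ and $\epsilon>0$; choose $a$ slightly smaller than $1-d^2$ and take $S\subseteq[0,1]$ with $|S|=a$ so that $1-\epsilon\le a=|S|(1-d^2)/(1-d^2)$... more to the point, we need $|S|$ close to but below $1/(1-d^2)\cdot(\text{something})$; the right target is a set $S$ with $|S|(1-d^2)$ just below $1$, i.e. $|S|$ just below $1/(1-d^2)$, on which $E(\Z)$ is $d$-approximately uniformly complete. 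The idea is to take $S$ to be (a dilate of) an interval, or a union of intervals of total length slightly less than $1/(1-d^2)$, on which the integer exponentials, though no longer a frame, can approximate each $e_w$ to within $d\sqrt{|S|}$ with uniformly $\ell^2$-bounded coefficients: one writes the best $L^2(S)$-approximation of $e_w$ from $\overline{\mathrm{span}}\,E(\Z)$ explicitly via the orthogonal projection, estimates the residual using the fact that on a set of measure $>1$ the integer exponentials fail completeness by a controlled amount, and checks the coefficient norm via the minimal-norm property of the projection. The explicit model is essentially the classical example (discussed in Subsection \ref{subsection 3.2}) of a uniformly complete but non-frame system, tuned so the residual equals exactly $d\sqrt{|S|}$.

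The main obstacle I expect is the first, duality, step carried out with honest uniformity over the continuum of test points $w\in\R$ rather than over a discrete index set: the definition of $d$-approximate uniform completeness quantifies over all real $w$, but the target Theorem \ref{d-almos-unif-min} needs a discrete family of functionals indexed by a uniformly discrete set. Bridging this gap requires either a periodization argument (reducing $L^2(S)$ for general $S$ to a problem on the torus, where completeness of $E(\Z)$ and Bessel bounds are transparent — this is presumably also how the "positive finite measure" generality of Theorems \ref{d-almos-unif-min} and \ref{thm unif complete} was obtained) or a compactness/weak-$*$ selection of a suitable subfamily of the $p_w$'s; ensuring that the constant $C$ and the approximation constant $d$ survive this reduction without loss is the delicate point. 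The norm bound $\sqrt{|S|}=\|e_w\|_{L^2(S)}$ is exactly the normalization that makes the threshold $d\sqrt{|S|}$ scale correctly under dilations of $S$, so I would track that normalization carefully throughout; once the reduction to Theorem \ref{d-almos-unif-min} is clean, the density arithmetic and the sharpness construction are routine.
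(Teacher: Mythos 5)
Your high-level plan (reduce $d$-approximate uniform completeness to $d$-approximate uniform minimality of a complementary system and invoke Theorem \ref{d-almos-unif-min}) is indeed the paper's strategy, but as written the reduction has a genuine gap, and the arithmetic you display is incorrect. If the complementary system were $d$-approximately uniformly minimal on a set of measure $1-|S|$, your chain $D^-(\L)\ge 1-D^+(\Z\setminus\L)\ge 1-\frac{1-|S|}{1-d^2}$ does \emph{not} yield $D^-(\L)\ge |S|(1-d^2)$: the inequality $1-\frac{1-|S|}{1-d^2}\ge |S|(1-d^2)$ holds only when $|S|\ge \frac{1}{2-d^2}$, and for small $|S|$ the left-hand side is even negative. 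To get the sharp constant one must carry the defect as $d\sqrt{|S|}$ \emph{measured against a normalized orthonormal exponential system}, and, more seriously, the complement-in-$\Z$ picture exists only when $\L\subseteq\Z$ and $S\subseteq[0,1]$, whereas the theorem concerns arbitrary uniformly discrete real $\L$ and arbitrary $S$ of finite measure. The point you flag as ``delicate'' is exactly where the paper does its work: fix a small $\eta$, work on $I_\eta=[0,1/\eta]$ where $\sqrt{\eta}\,E(\eta\Z)$ is an orthonormal basis, snap $\L$ onto the grid $\eta\Z$ (a perturbation controlled by a Bessel-type stability lemma), and use the approximants $p_w$ at the grid points $w\in\Gamma_\eta=\eta\Z\setminus\L_\eta$ to build near-biorthogonal functions supported on $I_\eta\setminus S$; this shows $E(\Gamma_\eta)$ is $d_\eta$-approximately uniformly minimal there with $d_\eta=\sqrt{\eta}\,(d\sqrt{|S|}+\sqrt{C}\eta)$, and applying Theorem \ref{d-almos-unif-min} and letting $\eta\to 0$ gives precisely $|S|\le D^-(\L)+d^2|S|$. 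Neither ``periodization'' nor a compactness/weak-$*$ selection by itself supplies this grid-refinement mechanism, and without it the constants do not come out right.

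The sharpness construction you sketch also fails quantitatively. If $S$ is a single interval $[0,L]$ with $1<L<2$, the best approximation of $e_x$ in $L^2[0,L]$ by a $1$-periodic function (hence by any $\sum_{n\in\Z}a_ne_n$ with $\ell^2$ coefficients) is $g(t)=\tfrac12\big(e_x(t)+e_x(t+1)\big)$ on the overlap, with squared error $2(L-1)\sin^2(\pi x)$, which for half-integer $x$ equals $2(L-1)$; the requirement $2(L-1)\le d^2L$ forces $L\le \frac{2}{2-d^2}<\frac{1}{1-d^2}$, so an interval (or a dilate of one) cannot approach the endpoint. The paper instead spreads the excess measure over many unit-spaced pieces: $S=[0,1]\cup\bigcup_{j=1}^n[j,j+\alpha_n]$ with $\alpha_n=\frac{d^2}{n(1-d^2)+1}$, and approximates $e_x$ by the Fourier series of the $1$-periodic function equal to $e_x$ on $[\alpha_n,1]$ and to $0$ on $[0,\alpha_n]$; the squared error is exactly $(n+1)\alpha_n=d^2|S|$, the coefficient bound is $\le 1$, and $|S|(1-d^2)=1-\alpha_n\ge 1-\epsilon$. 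In particular this is not a tuning of the example of Subsection \ref{subsection 3.2}, but a separate periodization construction.
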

    \vspace{5pt}
    
\begin{remark}
The notion of 'uniform completeness' defined here does not have an analog in an abstract Hilbert space setting. It may, however, be studied in the more general setting of RKHS, or in the setting of Gabor analysis. We discuss this in more detail in our upcoming work, joint with R.~Pai, \cite{NP}.
\end{remark}

\begin{remark}
In \cite{NO2007}, A.~Olevskii and the author study a different notion which is intermediate between 'complete systems' and 'frames'. It does not, however, serve as an analog to 'uniform minimality', and the density results related to it are significantly different from the results discussed above.
\end{remark}

The paper is organized as follows: In Section \ref{section 2} we prove Theorem  \ref{d-almos-unif-min}. In Section \ref{section 3} we motivate the definition of uniformly complete systems and prove that for exponential systems, the notion of 'uniform completeness' is indeed weaker then that of 'frames'. In Section \ref{section 4} we prove Theorem \ref{d-almos-unif-comp} and obtain Theorem \ref{thm unif complete} as a corollary.

\subsection{} The following notations will be fixed throughout the paper. We use the convention
\[
\widehat{f}(w)=\int_{\R}f(t)e^{-2\pi i wt}dt,
\]
for the Fourier transform of a function in $L^1(\R)$, with the standard extension to $L^2(\R)$.

For $\L\subseteq\R$ and $\delta>0$ we say that $\L$ is \textit{uniformly discrete with separation constant} $\delta$ if (\ref{unif discrete}) holds for $\L$ and $\delta$. For $\lambda\in\Lambda$ we denote by $\delta_{\lambda}$ the function defined on $\L$ which gives the value $1$ on $\l$ and zero otherwise.

We denote the cardinality of finite set $A$ by $\sharp A$, and the Lebesgue measure of a measurable set $S\subseteq \R$ by $|S|$. The indicator of such a set, $1\!\!1_S$, is equal to $1$ on $S$ and $0$ otherwise. We denote by $C$ different constants which may change from line to line.

\section{Approximate uniform minimality}\label{section 2}
In this section we prove  Theorem  \ref{d-almos-unif-min}.

\subsection{Auxiliary lemmas}

We start by reviewing two lemmas. The first may be viewed as a finite dimensional version of Theorem  \ref{d-almos-unif-min}. It is due to A.~Olevskii and A.~Ulanovskii, who applied it to study the case of compact sets $S$ in \cite{OU2010}. While it plays a different role in our proof of Theorem  \ref{d-almos-unif-min}, it still remains one of the main tools we apply. See \cite{OU2010} for the proof (or \cite{OUbook}, pp 78, Lemma 8.4).

Bellow we denote by $\textrm{id}_{N}$ the identity $N\times N$ matrix, and by $\|B\|_{HS}$ the Hilbert--Schmidt norm of an $N\times N$ complex valued matrix $B=(b_{jk})$, that is,
\[
\|B\|^2_{HS}=\sum_{j=1}^N\sum_{k=1}^N|b_{jk}|^2.
\]

\begin{lemma} \label{lin-algebra-lemma}
Let $B$ be an $N\times N$ complex valued matrix, and assume that for some $0<d<1$ we have
\[
\|B-\textrm{id}_{N}\|^2_{HS}\leq d^2N.
\]
Then for every $1<\gamma<1/d$ there exists a subspace $L\subseteq \ell_2^{N}:=\mathbb{C}^N$ of dimension
\[
(1-\gamma^2d^2)N-1\leq \textrm{dim}L,
\]
such that
\begin{equation}\label{matrix-bound}
\big(1-\frac{1}{\gamma}\big)\|a\|_{\ell_2^N}\leq \|Ba\|_{\ell_2^N} \qquad \forall a\in L.
\end{equation}
\end{lemma}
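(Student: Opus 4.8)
The plan is to pass to the singular value decomposition of the ``error matrix'' $A:=B-\mathrm{id}_N$ and to take for $L$ the span of the right singular vectors attached to the \emph{small} singular values. The point of the hypothesis $\|B-\mathrm{id}_N\|_{HS}^2\le d^2N$ is precisely that $A$ cannot have many large singular values, so discarding a controlled number of directions leaves a large subspace on which $A$ is a strict contraction, and hence $B=\mathrm{id}_N+A$ is bounded below there.

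Concretely, let $\sigma_1\ge\sigma_2\ge\cdots\ge\sigma_N\ge 0$ be the singular values of $A$, with associated orthonormal right singular vectors $v_1,\dots,v_N\in\mathbb{C}^N$, so that $\|Aa\|_{\ell_2^N}^2=\sum_{j=1}^N\sigma_j^2|\langle a,v_j\rangle|^2$ for every $a$. The hypothesis reads exactly $\sum_{j=1}^N\sigma_j^2=\|B-\mathrm{id}_N\|_{HS}^2\le d^2N$. I would then count the large singular values: if $\sigma_j>1/\gamma$ for $m$ indices $j$, then $d^2N\ge\sum_j\sigma_j^2> m/\gamma^2$, hence $m<\gamma^2 d^2N$. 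Setting $L:=\mathrm{span}\{v_j:\sigma_j\le 1/\gamma\}$ therefore gives $\dim L=N-m\ge(1-\gamma^2 d^2)N$, which is even slightly better than the claimed bound $(1-\gamma^2 d^2)N-1$.

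Then I would verify (\ref{matrix-bound}) on $L$. For $a\in L$ one has $\|Aa\|_{\ell_2^N}^2=\sum_{j:\sigma_j\le 1/\gamma}\sigma_j^2|\langle a,v_j\rangle|^2\le\gamma^{-2}\|a\|_{\ell_2^N}^2$, and so, by the triangle inequality, $\|Ba\|_{\ell_2^N}=\|a+Aa\|_{\ell_2^N}\ge\|a\|_{\ell_2^N}-\|Aa\|_{\ell_2^N}\ge(1-1/\gamma)\|a\|_{\ell_2^N}$, as required.

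I do not expect a genuine obstacle here; the only thing to get right is the choice of the singular-value cutoff. It must be small enough that the contraction factor it forces on $L$ is the prescribed $1-1/\gamma$, and simultaneously large enough that only few directions are thrown away — the count being $\|A\|_{HS}^2\le d^2N$ divided by the square of the cutoff. The value $1/\gamma$ is exactly what balances the two requirements, and the standing constraints $1<\gamma<1/d$ are what keep both $1-1/\gamma>0$ and $(1-\gamma^2 d^2)N>0$.
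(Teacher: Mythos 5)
Your argument is correct and complete. The paper does not prove this lemma itself --- it cites Olevskii--Ulanovskii (\cite{OU2010}; Lemma 8.4 of \cite{OUbook}) --- and their proof is the same spectral argument: diagonalize $(B-\mathrm{id}_N)^*(B-\mathrm{id}_N)$ and discard the eigendirections whose eigenvalue exceeds $1/\gamma^2$; your singular-value formulation is identical to this, and in fact yields the marginally cleaner bound $\dim L\ge (1-\gamma^2 d^2)N$ without the $-1$.
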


Next, we need the following lemma which was observed in \cite{ACNS} in the context of Gabor analysis. Below we adapt this observation to the setting of exponential systems, and add a short proof for completeness.

\begin{lemma}\label{children}
Let $S\subseteq\R$ be a set of positive finite measure and let $W\subseteq L^2(S)$ be a closed subspace. Denote by $P_W$ be the orthogonal projection from $L^2(S)$ onto $W$. Then,
\[
\int_{\R}\|P_We_x\|^2_{L^2(S)}dx=\textrm{dim}W.
\]
\end{lemma}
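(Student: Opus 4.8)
The plan is to reduce the statement to the Plancherel theorem by expanding the orthogonal projection in an orthonormal basis of $W$. Since $|S|<\infty$, the space $L^2(S)$ is separable, so $W$ has an at most countable orthonormal basis $\{\phi_j\}_j$ whose cardinality equals $\dim W$ (and if $\dim W=\infty$ both sides of the claimed identity will be $+\infty$, so the formula still holds). Note also that $e_x\in L^2(S)$ for every $x$, because $\|e_x\|^2_{L^2(S)}=|S|<\infty$. Parseval's identity in the Hilbert space $W$ then gives, for each fixed $x\in\R$,
\[
\|P_We_x\|^2_{L^2(S)}=\sum_j\big|\langle e_x,\phi_j\rangle_{L^2(S)}\big|^2 .
\]
Each summand is a measurable nonnegative function of $x$, so $x\mapsto\|P_We_x\|^2_{L^2(S)}$ is measurable, and Tonelli's theorem allows interchanging the sum and the integral:
\[
\int_{\R}\|P_We_x\|^2_{L^2(S)}\,dx=\sum_j\int_{\R}\big|\langle e_x,\phi_j\rangle_{L^2(S)}\big|^2\,dx .
\]

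Next I would evaluate each inner integral. For fixed $j$ set $\psi_j:=\overline{\phi_j}\cdot\mathbf{1}_S$; then $\psi_j\in L^2(\R)\cap L^1(\R)$, the $L^1$ membership following from $\phi_j\in L^2(S)\subseteq L^1(S)$ since $|S|<\infty$. With the paper's normalization of the Fourier transform,
\[
\langle e_x,\phi_j\rangle_{L^2(S)}=\int_S e^{2\pi i xt}\,\overline{\phi_j(t)}\,dt=\int_{\R}\psi_j(t)\,e^{2\pi i xt}\,dt=\widehat{\psi_j}(-x),
\]
so by the Plancherel theorem (and the change of variables $x\mapsto-x$)
\[
\int_{\R}\big|\langle e_x,\phi_j\rangle_{L^2(S)}\big|^2\,dx=\int_{\R}\big|\widehat{\psi_j}(x)\big|^2\,dx=\|\psi_j\|^2_{L^2(\R)}=\|\phi_j\|^2_{L^2(S)}=1 .
\]
Summing over $j$ yields $\sum_j 1=\dim W$, which is exactly the asserted equality. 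It is worth remarking that the final answer does not depend on the choice of orthonormal basis, consistent with the fact that both $\|P_We_x\|^2_{L^2(S)}$ and $\dim W$ are basis-independent.

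I do not expect a genuine obstacle in this argument: it is essentially a one-line application of Plancherel once the projection is written out coordinate-wise. The only points that need a word of justification are the measurability of $x\mapsto\|P_We_x\|^2_{L^2(S)}$ and the interchange of summation and integration, both of which are handled immediately by nonnegativity via Tonelli's theorem (equivalently, the monotone convergence theorem applied to partial sums), and the verification that $\psi_j\in L^1(\R)$ so that $\langle e_x,\phi_j\rangle_{L^2(S)}$ is literally the Fourier transform of an $L^1$ function evaluated at $-x$ — this is where the finiteness of $|S|$ enters.
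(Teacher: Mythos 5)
Your proof is correct and follows essentially the same route as the paper: expand $P_We_x$ in an orthonormal basis of $W$, identify each coefficient $\langle e_x,\phi_j\rangle_{L^2(S)}$ with a Fourier transform, and conclude by Plancherel. Your version merely adds the routine justifications (Tonelli, measurability, the possibly infinite-dimensional case) that the paper leaves implicit.
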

\begin{proof}
Denote dim$W=N$ and let $h_1,..,h_N$ be an orthonormal basis of $W$. We have
\[
\begin{aligned}
&\int_{\R}\|P_We_x\|_{L^2(S)}^2dx=\int_{\R}\sum_{n=1}^N|\langle h_n,e_x\rangle_{L^2(S)}|^2dx=\\
&=\sum_{n=1}^N\int_{\R}|\widehat{h}_n(x)|^2dx=\sum_{n=1}^N\|h_n\|_{L^2(S)}^2=N.
\end{aligned}
\]
\end{proof}

\subsection{A proof for Theorem \ref{d-almos-unif-min}}
\begin{proof}

Let $0<d<1$, $S\subseteq\R$, $\L\subseteq\R$ and $\{g_{\l}\}_{\l\in\L}\subseteq L^2(S)$ be as in Definition \ref{app unif min} and  Theorem \ref{d-almos-unif-min}. Denote
\begin{equation}\label{bound for g_l}
M:=\sup_{\l\in\L}\|g_{\l}\|_{L^2(S)}.
\end{equation}
Fix $0<\epsilon <1$ and $1<\gamma<\frac{1}{d}$. Denote $h:=\widehat{1\!\!1}_S$, and let $b>0$ be such that
\begin{equation}\label{b-size}
\int_{|x|>b}|h(x)|^2<\epsilon^2.
\end{equation}

\underline{\textit{Step} I.} Given an interval $I\subseteq\R$ put
\[
\Lambda_I=\Lambda\cap I\qquad \textrm{and}\qquad  N_I=\sharp\Lambda_I.
\]
Note that, since $\Lambda$ is uniformly discrete, $\Lambda_I$ is finite and we can write $\Lambda_I=\{\lambda_1<\lambda_2<...<\lambda_{N_I}\}$.

Next, consider the $N_I\times N_I$ complex valued matrix $B_I$ defined by
\[
(B_I)_{jk}=\langle {g_{\lambda_j}}, e_{\lambda_k}\rangle_{L^2(S)}\qquad j,k=1,..,N_I.
\]
Observe that for $a=(a_k)\in \ell_2^{N_I}$ we have
\[
\begin{aligned}
\|B_Ia\|_{\ell_2^{N_I}}^2&=\sum_{j=1}^{N_I}|\sum_{k=1}^{N_I} a_k\langle {g_{\lambda_j}}, e_{\lambda_k}\rangle_{L^2(S)}|^2\\
&=\sum_{j=1}^{N_I}|\langle {g_{\lambda_j}}, \sum_{k=1}^{N_I} \overline{a}_ke_{\lambda_k}\rangle_{L^2(S)}|^2,
\end{aligned}
\]
which implies that if $w=\sum \overline{a}_ke_{\l_k}$ then
\begin{equation}\label{mult by matrix}
\|B_Ia\|_{\ell_2^{N_I}}^2=\sum_{j=1}^{N_I}|\langle g_{\lambda_j}, w\rangle_{L^2(S)}|^2.
\end{equation}

Next, we wish to apply  Lemma \ref{lin-algebra-lemma} to the matrix $B_I$. For this we note that due to (\ref{unif-min-up-to-d}) we have
\[
\|B_I-\textrm{id}_{N_I}\|^2_{HS}\leq d^2N_I,
\]
and so Lemma \ref{lin-algebra-lemma} may indeed be applied. It follows that there exists a subspace $L\subseteq\ell_2^{N_I}$, of dimension
\begin{equation}\label{dim L}
(1-\gamma^2d^2)N_I-1\leq \textrm{dim}L,
\end{equation}
so that (\ref{matrix-bound}) holds for $B=B_I$ and every $a\in L$.

\underline{\textit{Step} II.} Denote,
\[
W=\{\sum_{k=1}^{N_I}\overline{a}_ke_{\lambda_k}: (a_k)\in L, \l_k\in\L_I\}.
\]
Since any finite family of exponential functions is linearly independent, we have $\textrm{dim}W=\textrm{dim}L$ and so it follows from (\ref{dim L}) that
\begin{equation}\label{dim W}
(1-\gamma^2d^2)N_I-1\leq \textrm{dim}W.
\end{equation}
Observe that for $w=\sum_{k=1}^{N_I}\overline{a}_ke_{\lambda_k}\in W$  the relations (\ref{matrix-bound}) and (\ref{mult by matrix}) imply that
\begin{equation}\label{RB-type-estimate}
(1-\frac{1}{\gamma})^2\sum_{k=1}^{N_I}|a_k|^2\leq \sum_{j=1}^{N_I}|\langle g_{\lambda_j},w\rangle_{L^2(S)}|^2.
\end{equation}

\underline{\textit{Step} III.} Let $P_W$ be the orthogonal projection from $L^2(S)$ onto $W$. For $x\in\R$ we have
\[
\|P_We_x\|_{L^2(S)}^2\leq \|e_x\|_{L^2(S)}^2=|S|.
\]
Denote by $I(b)$ the interval with the same center as $I$ and measure equal to $|I|+2b$ (where $b$ was defined in (\ref{b-size})). It follows that,
\[
\int_{I(b)}\|P_We_x\|_{L^2(S)}^2dx\leq |S|(|I|+2b).
\]
By Lemma \ref{children} we therefore have
\[
\textrm{dim}W\leq  |S|(|I|+2b)+\int_{\R\setminus I(b)}\|P_We_x\|_{L^2(S)}^2dx,
\]
which by the estimate (\ref{dim W}) implies that
\begin{equation}\label{use-of-observ}
(1-\gamma^2d^2)N_I-1\leq |S|(|I|+2b)+\int_{\R\setminus I(b)}\|P_We_x\|_{L^2(S)}^2dx.
\end{equation}
Denote the integral on the right side of (\ref{use-of-observ}) by $\Phi$. We next wish to estimate $\Phi$.

\underline{\textit{Step} IV.}
Let $x\in\R$. Since $P_We_x\in W$, there exist $(a_k(x))\in L$ so that $P_We_x=\sum_k\overline{a}_k(x)e_{\lambda_k}$. Recall the function $h$ which was defined in (\ref{b-size}). The Cauchy-Shwartz inequality implies that,
\[
\begin{aligned}
\|P_We_x\|_{L^2(S)}^2&=\langle P_We_x,e_x\rangle_{L^2(S)}=\langle \sum_{k=1}^{N_I} \overline{a}_k(x)e_{\lambda_k}, e_x\rangle_{L^2(S)}=\\
&=\sum_{k=1}^{N_I}\overline{a}_k(x)h(x-\lambda_k)\\
&\leq \big(\sum_{k=1}^{N_I}|a_k(x)|^2\big)^{\frac{1}{2}}\big(\sum_{k=1}^{N_I}|h(x-\lambda_k)|^2\big)^{\frac{1}{2}}.\\
\end{aligned}
\]
Applying (\ref{RB-type-estimate}) to the last expression we find that
\[
\|P_We_x\|_{L^2(S)}^2\leq C(\gamma)\big(\sum_{j=1}^{N_I}|\langle g_{\lambda_j}, P_W e_x\rangle_{L^2(S)}|^2\big)^{\frac{1}{2}}\big(\sum_{k=1}^{N_I} |h(x-\lambda_k)|^2\big)^{\frac{1}{2}},\\
\]
where $C(\gamma)$ is a constant depending only on $\gamma$. Integrating over $\R\setminus I(b)$ and applying the Cauchy-Shwartz inequality again, we obtain the estimate
\begin{equation}\label{estimate for I}
\Phi\leq C(\gamma)\Big(\sum_{j=1}^{N_I}\int_{\R\setminus I(b)}|\langle g_{\lambda_j}, P_W e_x\rangle_{L^2(S)}|^2dx\Big)^{\frac{1}{2}}\Big(\sum_{k=1}^{N_I}\int_{\R\setminus I(b)}|h(x-\lambda_k)|^2dx\Big)^{\frac{1}{2}}.
\end{equation}
For $j,k=1,...,N_I$ denote
\[
\alpha_j=\int_{\R\setminus I(b)}|\langle g_{\lambda_j}, P_W e_x\rangle_{L^2(S)}|^2dx,\qquad \beta_k= \int_{\R\setminus I(b)}|h(x-\lambda_k)|^2dx,
\]
so that (\ref{estimate for I}) may be presented as
\begin{equation}\label{estimate for I notation}
\Phi\leq C(\gamma)\Big(\sum_{j=1}^{N_I}\alpha_j\Big)^{\frac{1}{2}}\Big(\sum_{k=1}^{N_I}\beta_k\Big)^{\frac{1}{2}}.
\end{equation}

\underline{\textit{Step} V.} We wish to estimate $\alpha_j$ and $\beta_k$, $j,k=1,...,N_I$. To estimate $\alpha_j$ denote by $\phi_{\lambda_j}$ the Fourier transform of $P_Wg_{\lambda_j}$, and note that
\[
\langle g_{\lambda_j}, P_W e_x\rangle_{L^2(S)}=\langle P_Wg_{\lambda_j},  e_x\rangle_{L^2(S)} =\phi_{\lambda_j}(x).
\]
With this we have
\[
\alpha_j=\int_{\R\setminus I(b)}|\phi_{\lambda_j}(x)|^2dx\leq \|\phi_{\lambda_j}\|^2_{L^2(\R)}.
\]
Applying Parseval equality and the estimate (\ref{bound for g_l}) we find that
\begin{equation}\label{alpha}
\alpha_j\leq \|P_Wg_{\l_j}\|_{L^2(S)}^2\leq \|g_{\l_j}\|_{L^2(S)}^2\leq M^2.
\end{equation}

We turn to estimate $\beta_k$. For this, note that since $\l_k\in I$ we have $(-b,b)\subseteq I(b)-\l_k$. This implies that
\begin{equation}\label{beta}
\beta_k\leq \int_{|x|\geq b}|h(x)|^2dx\leq \epsilon^2,
\end{equation}
where the last estimate follows from (\ref{b-size}).

\underline{\textit{Step} VI.} To complete the proof we estimate $\Phi$ by inserting (\ref{alpha}) and (\ref{beta}) into (\ref{estimate for I notation}), for all $j,k=1,...,N_I$. We find that
\[
\Phi\leq C(\gamma)M N_I \epsilon.
\]
Recalling that $\Phi$ is the right most integral in (\ref{use-of-observ}), we obtain
\[
(1-d^2\gamma^2-C(\gamma)M\epsilon)N_I\leq |S|(|I|+2b)+1.
\]
Obtaining this estimate for all intervals $I$ of a fixed length $r$, taking the maximal value of $N_I$ over all such intervals, dividing by $r$ and allowing it to tend to infinity, we get
\[
(1-d^2\gamma^2-C(\gamma)M\epsilon)D^+(\Lambda)\leq |S|.
\]
Letting first $\epsilon$ tend to $0$, and then $\gamma$ tend to $1$, the result follows.
\end{proof}

\section{Uniform completeness}\label{section 3}
In this section we motivate the definition of uniformly complete systems. This definition appears to be more natural over the torus, and so we first focus our attention on that setting and study the concept there. In the second part of this section we give an example of a uniformly complete system over an interval which is not a frame there. 

\subsection{A discussion over the torus}
We denote  $\T=\R/\Z$ and identify $L^2(\T)$ with $L^2[0,1]$ as is the convention. Since the dual group of $\T$ is $\Z$, we restrict our attention to exponential functions with integer frequencies, and keep in mind that the family $\{e_n\}_{n\in\Z}$ forms an orthonormal basis in the space.

As pointed out in the introduction, the duality between Riesz sequences and frames is realized in several known results. The following lemma is a well known example over the torus. For a proof see e.g. \cite{OUbook}, p 10, Proposition 1.23.
\begin{lemma}\label{duality lemma}
Let $S\subseteq [0,1]$, $0<|S|<1$, and let $\L\subseteq\Z$. Denote $S^c=[0,1]\setminus S$ and $\L^c=\Z\setminus \L$. Then the following are equivalent.
\begin{itemize}
\item[i.] $E(\L)$ is a frame in $L^2(S)$.

\vspace{5pt}

\item[ii.] $E(\L^c)$ is a Riesz sequence in $L^2(S^c)$.
\end{itemize}
\end{lemma}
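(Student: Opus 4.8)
The plan is to prove the duality by a direct computation with Fourier coefficients on the torus, exploiting the fact that $\{e_n\}_{n\in\Z}$ is an orthonormal basis of $L^2[0,1]$ and that every $f\in L^2[0,1]$ splits as $f = \mathbf{1}_S f + \mathbf{1}_{S^c} f$ with $\|f\|^2 = \|\mathbf{1}_S f\|^2 + \|\mathbf{1}_{S^c} f\|^2$. The bridge between the two conditions is the observation that for $f\in L^2[0,1]$ the sequence $(\langle f, e_n\rangle)_{n\in\Z}$ is in $\ell^2(\Z)$, and one can decompose this sequence according to the partition $\Z = \Lambda \cup \Lambda^c$.

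\textbf{Step 1: reformulate the Riesz sequence condition.} First I would note that $E(\Lambda^c)$ is a Riesz sequence in $L^2(S^c)$ if and only if there are constants $0<A\le B$ with
\[
A\sum_{n\in\Lambda^c}|a_n|^2 \le \Bigl\| \sum_{n\in\Lambda^c} a_n e_n \Bigr\|^2_{L^2(S^c)} \le B\sum_{n\in\Lambda^c}|a_n|^2
\]
for all finitely supported $(a_n)$. Writing $g=\sum_{n\in\Lambda^c}a_n e_n\in L^2[0,1]$, one has $\sum|a_n|^2 = \|g\|^2_{L^2[0,1]}$ by orthonormality, while $\|g\|^2_{L^2(S^c)} = \|g\|^2_{L^2[0,1]} - \|g\|^2_{L^2(S)} = \sum |a_n|^2 - \|\mathbf{1}_S g\|^2_{L^2(S)}$. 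Hence the Riesz sequence condition is equivalent to: there is $\delta>0$ so that $\|\mathbf{1}_S g\|^2_{L^2(S)} \le (1-\delta)\|g\|^2_{L^2[0,1]}$ for every $g$ in the closed span $V_{\Lambda^c}$ of $\{e_n\}_{n\in\Lambda^c}$ in $L^2[0,1]$. In other words, the Riesz sequence property is exactly a statement that the restriction-to-$S$ map, restricted to $V_{\Lambda^c}$, has norm strictly less than $1$ (the upper Riesz bound is automatic here since $\|\mathbf 1_{S}g\|\le\|g\|$, giving $B=1$; only the lower bound, i.e. the strict gap, is substantive).

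\textbf{Step 2: reformulate the frame condition.} By definition $E(\Lambda)$ is a frame in $L^2(S)$ iff there are $0<A'\le B'$ with $A'\|f\|^2_{L^2(S)} \le \sum_{n\in\Lambda}|\langle f,e_n\rangle_{L^2(S)}|^2 \le B'\|f\|^2_{L^2(S)}$ for all $f\in L^2(S)$. Given $f\in L^2(S)$, extend it by zero to $\tilde f\in L^2[0,1]$; then $\langle f,e_n\rangle_{L^2(S)} = \langle \tilde f, e_n\rangle_{L^2[0,1]} =: c_n$ and $\|f\|^2_{L^2(S)} = \|\tilde f\|^2_{L^2[0,1]} = \sum_{n\in\Z}|c_n|^2$. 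So $\sum_{n\in\Lambda}|c_n|^2 = \|\tilde f\|^2 - \sum_{n\in\Lambda^c}|c_n|^2$, and the frame inequalities become: there is $\delta'>0$ with $\sum_{n\in\Lambda^c}|c_n|^2 \le (1-\delta')\sum_{n\in\Z}|c_n|^2$ for every $\tilde f \in L^2[0,1]$ supported in $S$. Now $\sum_{n\in\Lambda^c}|c_n|^2 = \|P_{V_{\Lambda^c}}\tilde f\|^2_{L^2[0,1]}$ where $P_{V_{\Lambda^c}}$ is the orthogonal projection onto $V_{\Lambda^c}$. Thus the frame property says: $\|P_{V_{\Lambda^c}}\tilde f\|^2 \le (1-\delta')\|\tilde f\|^2$ for every $\tilde f$ supported in $S$ — equivalently, the projection $P_{V_{\Lambda^c}}$ restricted to the subspace $\{\tilde f : \supp \tilde f\subseteq S\} = \mathbf 1_S L^2[0,1]$ has norm strictly less than $1$. (Again the upper frame bound $B'=1$ is automatic.)

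\textbf{Step 3: match the two.} It now remains to observe that the two conditions from Steps 1 and 2 are equivalent, which is a general Hilbert-space fact about a pair of closed subspaces $V = V_{\Lambda^c}$ and $U = \mathbf 1_S L^2[0,1]$: the operator $P_U|_V : V\to U$ has norm $<1$ iff $P_V|_U : U\to V$ has norm $<1$, because $\|P_U|_V\| = \|P_V|_U\| = \cos\theta(U,V)$ is the cosine of the angle between the subspaces (indeed $\|P_U P_V\| = \|P_V P_U\|$ since each equals the square root of $\|P_V P_U P_V\|$, and $(P_U|_V)^*=P_V|_U$). Concretely: $\|\mathbf 1_S g\|^2 = \langle P_U g, g\rangle$ for $g\in V$, and $\|P_V \tilde f\|^2 = \langle P_V \tilde f,\tilde f\rangle = \langle P_U P_V \tilde f,\tilde f\rangle$-type manipulations identify both suprema with $\|P_U P_V\|^2 = \|P_V P_U\|^2$. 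So $\sup_{g\in V,\,\|g\|=1}\|\mathbf 1_S g\|^2 < 1 \iff \sup_{\tilde f\in U,\,\|\tilde f\|=1}\|P_V\tilde f\|^2 < 1$, which is precisely the equivalence (i) $\iff$ (ii). One small point to dispatch: one must check that the lower frame bound $A'>0$ and the lower Riesz bound $A>0$ are the parts that correspond to these gap conditions, and that in both directions the relevant subspace is nontrivial (using $0<|S|<1$ so that neither $U$ nor $U^\perp$ is zero, which rules out degenerate cases) — but since a reference (\cite{OUbook}, Proposition 1.23) is cited, I would keep this brief.

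\textbf{Main obstacle.} The only genuinely delicate point is the clean identification in Step 3 that both extremal quantities equal the squared cosine of the angle between $V_{\Lambda^c}$ and $\mathbf 1_S L^2[0,1]$, i.e. that $\|P_U P_V\| = \|P_V P_U\|$ and that this single number simultaneously controls the lower Riesz bound and the lower frame bound. Everything else is bookkeeping with orthonormal bases and the orthogonal decomposition of $L^2[0,1]$; no analytic subtlety enters because $S\subseteq[0,1]$ is bounded and $\Lambda\subseteq\Z$, so all the series involved are honest $\ell^2$ or $L^2$ expansions. Since the statement is quoted with a reference, I would present the above as a sketch and refer to \cite{OUbook} for the details.
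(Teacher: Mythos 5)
Your argument is correct. Note that the paper does not prove this lemma at all — it is quoted with a citation to \cite{OUbook} (Proposition 1.23) — so there is no in-paper proof to compare against; your sketch is in fact the standard route to this duality. The two reformulations are right: with $U=\mathbf{1}_S L^2[0,1]$ and $V$ the closed span of $\{e_n\}_{n\in\Lambda^c}$, multiplication by $\mathbf{1}_S$ is exactly $P_U$, the lower Riesz bound for $E(\Lambda^c)$ in $L^2(S^c)$ is equivalent to $\|P_U|_V\|<1$, the lower frame bound for $E(\Lambda)$ in $L^2(S)$ is equivalent to $\|P_V|_U\|<1$, and these norms coincide since $(P_UP_V)^*=P_VP_U$; the upper bounds are automatic with constant $1$. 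The only bookkeeping worth making explicit in a full write-up is the passage from finitely supported coefficient sequences to the closed span $V$ in Step 1 (immediate, since $(a_n)\mapsto\sum a_ne_n$ is an isometry onto $V$), and the degenerate case $\Lambda^c=\emptyset$, which is trivially consistent; the hypothesis $0<|S|<1$ is not otherwise needed for the equivalence.
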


Our next goal is to study the notion of 'uniform completeness' presented in Definition \ref{def unif comp}. For this we first adapt this notion to the torus, that is, to the case where $S\subseteq [0,1]$ and the frequencies of the exponential functions are restricted to the integers. To avoid ambiguities, we term this adapted notion '($\Z$)-uniform completeness'.

\begin{definition}\label{torus unif comp}

Let $S\subseteq [0,1]$ be of positive measure and let $\L\subseteq\Z$. We say that $E(\L)$ is ($\Z$)-\textit{uniformly complete} in $L^2(S)$ if for every $k\in\Z$ and every $\epsilon>0$ there exists a linear combination
\[
p_k=\sum_{n\in\L} a_{n}(k)e_{n},
\]
 so that
$
\|e_{k}-p_k\|_{L^2(S)}<\epsilon
$
and
\begin{equation}\label{size of coef}
\sum_{n\in\L}|a_{n}(k)|^2\leq C,
\end{equation}
where $C$ is a positive constant which does not depend on $k$ or on $\epsilon$.
\end{definition}

The notions of '($\Z$)-uniform completeness' and 'uniform minimality' are related via the following lemma, which should be compared with Lemma  \ref{duality lemma}.

\begin{lemma}\label{duality lemma for unif}
Let $S\subseteq [0,1]$, $0<|S|<1$, and let $\L\subseteq\Z$. Denote $S^c=[0,1]\setminus S$ and $\L^c=\Z\setminus \L$. Then the following are equivalent.
\begin{itemize}
\item[i.] $E(\L)$ is ($\Z$)-uniformly complete in $L^2(S)$.

\vspace{5pt}

\item[ii.] $E(\L^c)$ is uniformly minimal in $L^2(S^c)$.
\end{itemize}
\end{lemma}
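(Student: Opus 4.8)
The plan is to exploit the decomposition $L^2[0,1] = L^2(S) \oplus L^2(S^c)$ (as a direct sum of the subspaces of functions supported on $S$ and on $S^c$), under which each $e_k$, $k \in \Z$, splits as $e_k = e_k^S + e_k^{S^c}$ where $e_k^S = \1_S e_k$ and $e_k^{S^c} = \1_{S^c} e_k$. I would first record the elementary identities $\langle f, e_k \rangle_{L^2(S)} = \langle \1_S f, e_k\rangle_{L^2[0,1]}$ for $f \in L^2(S)$, and the fact that for a combination $p = \sum_{n \in \L} a_n e_n$ one has $\|e_k - p\|_{L^2(S)}^2 = \|e_k^S - \1_S p\|_{L^2[0,1]}^2$. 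Since $\{e_n\}_{n \in \Z}$ is an orthonormal basis of $L^2[0,1]$, the quantity $\sum_{n \in \L}|a_n(k)|^2$ is exactly the squared norm of $\sum_{n \in \L} a_n(k) e_n$ in $L^2[0,1]$, which will let me move freely between the coefficient bound and a norm bound on the approximant.

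Next I would prove the implication (i) $\Rightarrow$ (ii). Assume $E(\L)$ is $(\Z)$-uniformly complete in $L^2(S)$ with constant $C$. For each $m \in \L^c$ I want to produce $g_m \in L^2(S^c)$, uniformly bounded in norm, biorthogonal to $E(\L^c)$ in $L^2(S^c)$. The natural candidate is to take $p_m^{(\epsilon)} = \sum_{n \in \L} a_n(m) e_n$ approximating $e_m$ in $L^2(S)$ to within $\epsilon$, and set $g_m^{(\epsilon)} = \1_{S^c}(e_m - p_m^{(\epsilon)})$, the $S^c$-component of the error. For $\gamma \in \L^c$ with $\gamma \ne m$, since $p_m^{(\epsilon)}$ only involves frequencies in $\L$, we get $\langle g_m^{(\epsilon)}, e_\gamma\rangle_{L^2(S^c)} = \langle e_m - p_m^{(\epsilon)}, e_\gamma\rangle_{L^2[0,1]} - \langle e_m - p_m^{(\epsilon)}, e_\gamma\rangle_{L^2(S)} = 0 - \langle e_m - p_m^{(\epsilon)}, e_\gamma\rangle_{L^2(S)}$, which is $O(\epsilon)$ by Cauchy--Schwarz; and for $\gamma = m$ we get $1 - \langle e_m - p_m^{(\epsilon)}, e_m\rangle_{L^2(S)}$, which is $1 + O(\epsilon)$. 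Also $\|g_m^{(\epsilon)}\|_{L^2(S^c)} \le \|e_m - p_m^{(\epsilon)}\|_{L^2[0,1]} \le 1 + \|p_m^{(\epsilon)}\|_{L^2[0,1]} \le 1 + \sqrt{C}$, uniformly in $m$ and $\epsilon$. So for each $m$ these are approximately biorthogonal with a uniform norm bound; passing to the limit $\epsilon \to 0$ (using weak-$*$ compactness of the ball of $L^2(S^c)$, or a normal families/diagonal argument) along a subsequence gives $g_m \in L^2(S^c)$ with $\|g_m\| \le 1 + \sqrt C$ and exact biorthogonality $\langle g_m, e_\gamma\rangle_{L^2(S^c)} = \delta_m(\gamma)$ for all $\gamma \in \L^c$. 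This is precisely uniform minimality of $E(\L^c)$ in $L^2(S^c)$.

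For the converse (ii) $\Rightarrow$ (i), assume $\{g_m\}_{m \in \L^c} \subseteq L^2(S^c)$ is a uniformly bounded biorthogonal system for $E(\L^c)$. Fix $k \in \Z$ and $\epsilon > 0$. If $k \in \L$ take $p_k = e_k$ and there is nothing to do, so assume $k \in \L^c$. I would consider the closed span $V = \overline{\operatorname{span}}\{e_n : n \in \L\}$ in $L^2(S)$ and let $p_k$ be (an approximation to) the orthogonal projection of $e_k$ onto $V$; the point is to show $\operatorname{dist}_{L^2(S)}(e_k, V) = 0$ together with a uniform-in-$k$ bound on the $\ell^2$ norm of the coefficients. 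For the distance-zero (completeness) part: any $f \in L^2(S)$ orthogonal to all $e_n$, $n \in \L$, extends by zero to $\tilde f = \1_S f \in L^2[0,1]$ whose Fourier coefficients vanish off $\L^c$, i.e. $\tilde f \in \overline{\operatorname{span}}\{e_m : m \in \L^c\}$ in $L^2[0,1]$; testing against $g_k$ (extended by zero to $L^2[0,1]$) gives $\langle \tilde f, g_k \rangle = 0$ while also $\langle e_k - \tilde f, g_k\rangle_{L^2(S^c)}$ can be computed, and chasing these relations forces $\langle e_k, e_m \rangle_{L^2(S)}$-type identities that pin down $f$; the cleanest route is to show directly that $e_k$ lies in the $L^2(S)$-closure of $\operatorname{span}\{e_n : n \in \L\}$ by dualizing: if not, there is a norming functional, i.e. $f \in L^2(S)$, $\|f\|=1$, $f \perp e_n$ for $n \in \L$, $\langle f, e_k\rangle_{L^2(S)} \ne 0$, but then $\1_S f + (\text{something in } L^2(S^c))$ can be arranged to be orthogonal to $E(\L^c)$ in... — here I must be careful. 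The more robust approach for the uniform coefficient bound is a duality/Hahn--Banach computation showing $\operatorname{dist}_{L^2(S)}(e_k, V)^{-1}$-type quantities and the minimal-norm coefficient sequence are controlled by $\sup_m \|g_m\|_{L^2(S^c)}$; concretely, the minimal $\ell^2$-norm of a coefficient sequence $(a_n)$ with $\|\sum_{n\in\L} a_n e_n - e_k\|_{L^2(S)} \le \epsilon$ is, by standard duality for the moment problem, bounded in terms of how well $g_k$ (on $S^c$) together with the biorthogonality witnesses the solvability — this is exactly the content of Proposition 1.23 / Lemma \ref{duality lemma} adapted to the ``approximate, uniformly bounded'' setting, and I would mirror that proof.

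The main obstacle I anticipate is precisely this converse direction: Lemma \ref{duality lemma} equates \emph{frame} with \emph{Riesz sequence} via a clean closed-range duality, but here both sides are ``one-sided'' relaxations (approximation up to $\epsilon$ with uniform coefficient control on one side; uniform boundedness of the biorthogonal system, without a positive lower Riesz bound, on the other), so the operators involved need not have closed range and I cannot simply quote Proposition 1.23. I expect to need a quantitative Hahn--Banach argument: express the existence of $p_k$ with $\|e_k - p_k\|_{L^2(S)} < \epsilon$ and $\sum |a_n(k)|^2 \le C$ as the consistency of an infinite linear system, and show its solvability with the stated norm bound is \emph{equivalent} to the estimate $|\langle e_k, \psi\rangle_{L^2(S)}| \le \sqrt{C}\,\|\,\text{(part of }\psi\text{)}\| + \epsilon\|\psi\|$ for all $\psi$, then verify this estimate using the uniformly bounded biorthogonal family $\{g_m\}$ by writing an arbitrary $\psi \in L^2(S)$, extending by zero, expanding in the orthonormal basis $\{e_j\}_{j \in \Z}$, and splitting the sum over $\L$ and $\L^c$. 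The $\epsilon$ (rather than exact interpolation) is what makes the non-closed-range issue harmless, and tracking it carefully through the duality is the crux; the $k$-uniformity of $C$ comes out because the only $k$-dependent object entering the bound is $g_k$, and $\sup_m \|g_m\|_{L^2(S^c)} < \infty$ by hypothesis.
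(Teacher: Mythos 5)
Your direction (i) $\Rightarrow$ (ii) is essentially correct and is, up to cosmetic differences, the paper's argument: form the errors $e_m-p_m^{(\epsilon)}$, note they are uniformly bounded in $L^2[0,1]$, have Fourier coefficient $\delta_m(\gamma)+O(\epsilon)$ against each $e_\gamma$, $\gamma\in\L^c$, and pass to a weak limit, which is supported on $S^c$ because the $L^2(S)$-norms of the errors tend to zero. (The paper takes the weak limit of the full error $e_k-p_k^{(\epsilon)}$ on $[0,1]$ rather than of its restriction to $S^c$, which avoids even the $O(\epsilon)$ bookkeeping, but your variant works.)

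The converse direction (ii) $\Rightarrow$ (i), however, is not proved in your proposal: you identify it as ``the crux,'' sketch a quantitative Hahn--Banach / moment-problem duality, and leave it unfinished, and indeed that route is both harder than necessary and not carried to a point where one can check it. The idea you are missing is that the biorthogonal function itself hands you the approximant. Fix $k\in\L^c$ and let $g_k\in L^2(S^c)$ be the biorthogonal element, extended by zero to $[0,1]$. Since $g_k$ vanishes on $S$, its inner products over $[0,1]$ and over $S^c$ coincide, so its Fourier coefficients satisfy $\widehat{g_k}(\ell)=\delta_k(\ell)$ for all $\ell\in\L^c$; hence its Fourier expansion in $L^2[0,1]$ has the form
\begin{equation*}
g_k=e_k+\sum_{n\in\L}a_n(k)e_n .
\end{equation*}
Restricting this identity to $S$, where $g_k=0$, gives $e_k=-\sum_{n\in\L}a_n(k)e_n$ in $L^2(S)$ (an exact decomposition, so any $\epsilon>0$ is accommodated by a partial sum), and Parseval gives $\sum_{n\in\L}|a_n(k)|^2=\|g_k\|_{L^2[0,1]}^2-1\le C$, uniformly in $k$ by the uniform bound on $\|g_k\|$. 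For $k\in\L$ one takes $p_k=e_k$. No closed-range or norming-functional argument is needed; in particular your worry about the non-closed range of the relevant operators, while legitimate for the frame/Riesz-sequence duality of Lemma~\ref{duality lemma}, simply does not arise here.
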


\begin{proof}
Let $S\subseteq [0,1]$ and $\L\subseteq\Z$ be as in the lemma. We first show that (\textit{ii}) implies (\textit{i}). Let $k\in\Z$. We wish to show that $e_k$ may be approximated as described in Definition \ref{torus unif comp}. If $k\in \L$ this is immediate, and so we may assume that $k\in \L^c$.

By the condition of uniform minimality in (\textit{ii}), there exists a function $g_k$, supported on $S^c$, so that
\begin{equation}\label{scalar product}
\langle g_k, e_{\ell} \rangle_{L^2[0,1]}=\langle g_k, e_{\ell} \rangle_{L^2(S^c)} =\delta_k(\ell)\qquad,\forall \ell\in\L^c.
\end{equation}
and
\begin{equation}\label{norm}
\|g_k\|_{L^2[0,1]}=\|g_k\|_{L^2(S^c)}\leq C,
\end{equation}
where $C$ is a constant which does not depend on $k$.

When considered as a function in $L^2[0,1]$, the function $g_k$ admits a Fourier decomposition. The relation (\ref{scalar product}) implies that this decomposition takes the form
\begin{equation}\label{gk series}
g_k=e_k+\sum_{n\in\L}a_n(k)e_n,
\end{equation}
where the equality holds in $L^2[0,1]$. As $g_k$ is supported on $S^c$, the decomposition (\ref{gk series}) implies that in $L^2(S)$ we have
\[
e_k=-\sum_{n\in\L}a_n(k)e_n.
\]
Moreover, by applying Parseval equality to  (\ref{gk series}), we find that
\[
\sum_{n\in\L}|a_n(k)|^2< \sum_{n\in\L}|a_n(k)|^2+1=\|g_k\|_{L^2[0,1]}^2\leq C,
\]
where the last inequality is due to (\ref{norm}).

This proves that $E(\L)$ is ($\Z$)-uniformly complete in $L^2(S)$. In fact, this seemingly proves a somewhat stronger property: $e_k$ is not only well approximated, but in fact it is well decomposed. See Remark \ref{unif comp decomp} below in this regard.

We next show that (\textit{i}) implies (\textit{ii}). Let $k\in\L^c$. Our goal is to construct a function $g_k$, supported on $S^c$, which satisfies (\ref{scalar product}) and (\ref{norm}).

Let $\epsilon>0$. The condition of ($\Z$)-uniform completeness implies the existence of a trigonometric polynomial $p_k^{(\epsilon)}$ \textit{with frequencies from $\L$} which satisfies
\[
\|e_k-p_k^{(\epsilon)}\|_{L^2(S)}\leq \epsilon,\quad\textrm{and}\quad \|p_k^{(\epsilon)}\|_{L^2[0,1]}\leq C,
\]
where the estimate on the right follows by applying Parseval's equality to (\ref{size of coef}), and so $C$ is a constant which does not depend on $k$ or on $\epsilon$.

Denote
$
g_k^{(\epsilon)}=e_k-p_k^{(\epsilon)}.
$
The properties of $p_k^{(\epsilon)}$ imply that 
\begin{equation}\label{app of ek}
\|g_k^{(\epsilon)}\|_{L^2(S)}\leq \epsilon,
\end{equation}
and,
\begin{equation}\label{norm via coef}
\|g_k^{(\epsilon)}\|_{L^2[0,1]}\leq C,
\end{equation}
where $C$ is a constant which does not depend on $k$ or on $\epsilon$.

Since the family $g_k^{(\epsilon)}$ is bounded in $L^2[0,1]$, there exists a subsequence $g_k^{(\epsilon_j)}$ which converges weakly to some function $g_k\in L^2[0,1]$ as $\epsilon_j\rightarrow 0$. Conditions (\ref{app of ek}) and (\ref{norm via coef}) ensure that $g_k$ is supported on $S^c$ and that $\|g_k\|_{L^2(S^c)}\leq C$.

Recall that for every $\epsilon>0$ the polynomials $p_k^{(\epsilon)}$ have spectrum supported on $\L$. It follows that for every $j$, we have
\[
\langle g_k^{(\epsilon_j)},e_{\ell}\rangle_{L^2[0,1]} =\langle e_k-p_k^{(\epsilon_j)}, e_{\ell}\rangle_{L^2[0,1]}=\delta_k(\ell)\qquad\forall\ell\in\L^c.
\]
As $g_k$ is a weak limit of $g_k^{(\epsilon_j)}$, and $g_k$ is supported on $S^c$, we may conclude that (\ref{scalar product}) holds for $g_k$. This completes the proof.

\end{proof}

\begin{remark}\label{unif comp decomp}
Let $S\subseteq [0,1]$ and $\L\subseteq \Z$ be as above. If $E(\L)$ is ($\Z$)-uniformly complete in $L^2(S)$, then for every $k\in\Z$ the function $e_k$ admits a decomposition
\[
e_k=\sum_{n\in\L} a_n(k)e_n
\]
which converges in $L^2(S)$ and which satisfies
\[
\sum_{n\in\L} |a_n(k)|^2<C,
\]
for some positive constant $C$ which does not depend on $k$. Indeed, this follows from an examination of the proof of Lemma  \ref{duality lemma for unif}.
\end{remark}

Fix $m>0$. One may apply a standard dilation argument to Definition \ref{torus unif comp} and to Lemma \ref{duality lemma for unif}, to obtain a notion of "$(\Z/m)$-uniform completeness" which holds for sets $S\subseteq [-m/2,m/2]$ of positive measure and for $\L\subseteq \Z/m$.
The extension to the notion of uniform completeness, as it appears in Definition \ref{def unif comp}, now seems natural.
\subsection{An example over the real line}\label{subsection 3.2}

Our goal in this subsection is to show that a uniformly complete exponential system need not be a frame. An analogues example for uniformly minimal systems is well known, and discussed in \cite{OUbook}, pp 10-11, (see also the references therein). We start by stating the latter example explicitly.

\begin{example}\label{unif min example} Let  $\Gamma=\{...-5,-3,-1, 2, 4, 6,...\}$. Then $E(\Gamma)$ is uniformly minimal in $L^2[0,1/2]$ but not a Riesz sequence there.
\end{example}

We now construct an analogues example for uniformly complete systems.

\begin{example}\label{unif comp example}  Let  $\Lambda=\{...-5,-3,-1,0, 2, 4, 6,...\}$. Then $E(\Lambda)$ is uniformly complete in $L^2[0,1/2]$ but not a frame there.
\end{example}

\begin{proof}
We make a preliminary observation: If $E(\L)$ is a frame in $L^2(S)$ then the same is true for $E(-\L)$, where $-\L:=\{-\l:\:\l\in\L\}$. A similar observation holds for all types of systems discussed in this note. Further, these systems are invariant also under translations of the set $S$.

By this observation, it is enough to show that $E(-\L)$ is not a frame in $L^2[1/2,1]$, and that it is uniformly complete in $L^2[1/4,3/4]$.

The first statement is well known (see e.g. \cite{OUbook}, pp 10-11), we add a short proof for completeness. Assume for a contradiction that $E(-\L)$ is a frame in $L^2[1/2,1]$. By Lemma \ref{duality lemma}, this implies that $E(\Gamma)$ is a Riesz sequence in $L^2[0,1/2]$, where
\[
\Gamma=\Z\setminus (-\L)=\{...-5,-3,-1, 2, 4, 6,...\}.
\]
This contradicts Example \ref{unif min example}.

We next show that $E(-\L)$ is uniformly complete in $L^2[1/4,3/4]$. Denote as above, $\Gamma=\Z\setminus(-\L)$. We start by observing that by Example \ref{unif min example} (and the preliminary observation at the beginning of this proof), $E(\Gamma)$ is uniformly minimal in $L^2[-1/4,1/4]$. Since all functions in the system $E(\Gamma)$ are $1$-periodic, we find that the system is uniformly minimal in $L^2([0,1/4]\cup[3/4,1])$ as well.

The last assertion implies that for every $k\in\Gamma$ there exists a function $g_{k}\in L^2[0,1]$, which satisfies $g_{k}=0$ almost everywhere on $[1/4,3/4]$, so that
\begin{equation}\label{scalar product ii}
\langle g_{k}, e_{\ell}\rangle_{L^2[0,1]} =\delta_{k}(\ell)\qquad \forall\ell\in\Gamma,
\end{equation}
and
\begin{equation}\label{norm ii}
\|g_{k}\|_{L^2[0,1]}\leq C,
\end{equation}
where $C$ is a positive constant which does not depend on $k$.

Let $\phi\in C^{\infty}(\R)$ be a function which satisfies $|\phi|\leq 1$ on [0,1], $\phi=1$ over $[1/4,3/4]$, and $\phi=0$ over $\R\setminus  [0,1]$. For such a function there exists a constant $M>0$ so that,
\begin{equation}\label{1-estimate}
\sum_{k\in\Z}|\widehat{\phi}(x+k)|\leq M\qquad \forall x\in\R.
\end{equation}

Fix $x\in\R$. We wish to show that $e_x$ can be 'well approximated' over $[1/4,3/4]$ by linear combinations of elements from $E(-\L)$. We will show that a stronger statement holds, namely, that in this case $e_x$ can be decomposed into a series with the required control over the size of the coefficients.

 Denote $\phi_x=e_{(-x)}\phi$, and note that $\widehat{\phi}_x(k)=\widehat{\phi}(x+k)$. By (\ref{norm ii}) and (\ref{1-estimate}) the series
\[
H_x:=\phi_x-\sum_{k\in\Gamma} \widehat{\phi}_x(k)g_{k}
\]
converges in $L^2[0,1]$, and moreover,
  \begin{equation}\label{bound for H}
\|H_x\|_{L^2[0,1]}\leq 1+ \sum_{k\in\Gamma}|\widehat{\phi_x}(k)|\|g_{k}\|_{L^2[0,1]}\leq 1+MC=C_1,
  \end{equation}
 where $C$ and $M$ are the constants from the estimates (\ref{norm ii}) and (\ref{1-estimate}) respectively, and so $C_1$ is a constant which does not depend on $x$.

 Consider the Fourier decomposition of $H_x$. For $\ell\in\Gamma$ the identity (\ref{scalar product ii}) implies that
  \[
 \widehat{H}_x(\ell)=\widehat{\phi}_x(\ell)- \sum_{k_\in\Gamma} \widehat{\phi}_x(k)\langle g_{k}, e_{\ell}\rangle_{L^2[0,1]} =0.
  \]
 The spectrum of $H_x$ is therefore supported on $-\L=\Z\setminus\Gamma$ and so its Fourier decomposition takes the form
  \begin{equation}\label{fourier decomp of H}
  {H}_x=\sum_{n\in(-\L)} a_n(x)e_n.
  \end{equation}

 Observe that on the interval $[1/4,3/4]$ we have $H_x=e_x$. Indeed, this follows from the fact that on this interval $\phi$ is equal to $1$, while the functions $g_k$ are equal to zero. Keeping in mind the decomposition (\ref{fourier decomp of H}) we conclude that on  $[1/4,3/4]$ we have 
 \[
  e_x=\sum_{n\in(-\L)} a_n(x)e_n.
 \]

 It remains to show the required control over the size of the coefficients. This follows by first applying Parseval's identity to the decomposition (\ref{fourier decomp of H})  and then applying the estimate (\ref{bound for H}), as follows,
 \[
 \sum_{n\in(-\L)}| a_n(x)|^2=\|{H}_x\|^2_{L^2[0,1]}\leq C_1^2.
 \]
 As $C_1$ is a constant not depending on $x$, the result follows.
\end{proof}

\section{Approximate uniform completeness}\label{section 4}

In this section we prove Theorem \ref{d-almos-unif-comp} and obtain Theorem \ref{thm unif complete} as a corollary. Our approuch follows an approach of Olevskii and Ulanovskii, where stability and duality arguments are applied to obtain density results (see e.g. \cite{OUbook}, p48).

\subsection{Auxiliary lemmas}

The following lemma is well known. It regards the so called \textit{Bessel property} of exponential systems. For a proof see e.g. \cite{OUbook}, p15.

\begin{lemma}\label{bessel lemma}
Let $S\subseteq [0,1]$ be of positive measure and let $\L\subseteq\R$ be a uniformly discrete sequence with separation constant $\delta>0$. Then for every $f\in L^2(S)$ we have
\[
\sum_{\l\in\L}|\langle f,e_{\l} \rangle_{L^2(S)}|^2\leq C(\delta)\|f\|^2_{L^2(S)},
\]
where $C(\delta)$ is a positive constant depending only on $\delta$.
\end{lemma}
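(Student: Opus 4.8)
The plan is to reduce the statement to a sampling inequality of Plancherel--P\'olya type for functions band-limited to $[0,1]$, and then to prove that inequality directly using a one-dimensional Sobolev-type bound together with the separation hypothesis on $\L$.

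First I would extend $f$ by zero to all of $\R$, so that $f\in L^2(\R)$ is supported in $[0,1]$ and $\|f\|_{L^2(\R)}=\|f\|_{L^2(S)}$. With the Fourier transform normalization fixed in the paper one has $\langle f,e_\l\rangle_{L^2(S)}=\widehat f(\l)$ for every $\l\in\L$, so the claim becomes $\sum_{\l\in\L}|\widehat f(\l)|^2\le C(\delta)\|f\|_{L^2(S)}^2$. The key structural input is that $\widehat f$ is a smooth function with controlled $L^2$ norm and controlled $L^2$ derivative: by Plancherel, $\|\widehat f\|_{L^2(\R)}=\|f\|_{L^2(S)}$, while since $f$ is supported in $[0,1]$ the function $t\mapsto tf(t)$ lies in $L^1\cap L^2$, so differentiation under the integral sign gives $(\widehat f\,)'=\widehat g$ with $g(t)=-2\pi i\,t f(t)$; hence $\|(\widehat f\,)'\|_{L^2(\R)}=2\pi\|t f\|_{L^2(\R)}\le 2\pi\|f\|_{L^2(S)}$, because $|t|\le 1$ on the support of $f$.

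Next I would invoke the elementary pointwise estimate: for $h$ absolutely continuous on an interval $J$ of length $\delta$ and $x\in J$, writing $h(x)=h(s)+\int_s^x h'$, applying Cauchy--Schwarz, squaring, and averaging the resulting inequality over $s\in J$ yields
\[
|h(x)|^2\le \frac{2}{\delta}\int_J|h|^2+2\delta\int_J|h'|^2 .
\]
Applying this with $h=\widehat f$ and, for each $\l\in\L$, with $J=J_\l:=(\l-\tfrac{\delta}{2},\l+\tfrac{\delta}{2})$, and then summing over $\l\in\L$, the separation condition (\ref{unif discrete}) guarantees that the intervals $\{J_\l\}_{\l\in\L}$ have pairwise disjoint interiors, so each of the two sums of integrals over the $J_\l$ is dominated by the corresponding integral over all of $\R$. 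This gives
\[
\sum_{\l\in\L}|\widehat f(\l)|^2\le \frac{2}{\delta}\|\widehat f\|_{L^2(\R)}^2+2\delta\|(\widehat f\,)'\|_{L^2(\R)}^2\le\Big(\frac{2}{\delta}+8\pi^2\delta\Big)\|f\|_{L^2(S)}^2,
\]
which is the lemma with $C(\delta)=2/\delta+8\pi^2\delta$.

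I do not expect a genuine obstacle here: this is essentially the classical Plancherel--P\'olya inequality, and the only points requiring care are the justification of differentiation under the integral sign (immediate since $tf\in L^1$ with compact support) and the verification that, under $|\l-\gamma|\ge\delta$, the intervals $J_\l$ really are disjoint. An alternative route would be to split $\L$ into boundedly many subsequences, each of separation $\ge 1$, and apply the known Bessel bound for widely separated exponential systems; but the direct argument above is shorter and produces an explicit constant depending only on $\delta$.
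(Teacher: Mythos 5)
Your proof is correct and complete. The paper itself does not prove this lemma but defers to \cite{OUbook}; your argument is exactly the standard Plancherel--P\'olya route one finds there (bound $\|(\widehat f\,)'\|_{L^2}$ via Parseval and the support condition $S\subseteq[0,1]$ --- the same bound the paper reuses in its proof of Lemma \ref{stability} --- then apply the one-dimensional Sobolev estimate on the pairwise disjoint intervals $J_\l$ of length $\delta$), and all steps, including the explicit constant $C(\delta)=2/\delta+8\pi^2\delta$, check out.
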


We will make use of a standard stability argument. The version we require is formulated in the lemma below. We add a proof for completeness.

\begin{lemma}\label{stability}
Let $S\subseteq [0,1]$ be  of positive measure and let $\L\subseteq\R$ be a uniformly discrete sequence with separation constant $\delta>0$. Fix $0<\eta\leq \delta/3$. Assume that $\{\rho_{\l}\}_{\l\in\L}\subseteq \R$ satisfies $|\rho_{\l}-\l|\leq \eta$ for every $\l\in\L$. Then for every $\{a(\l)\}\in\ell^2(\L)$ we have
\[
\Big\|\sum_{\l\in\L}a_{\l}(e_{\rho_{\l}}-e_{\l})\Big\|^2_{L^2(S)}\leq C(\delta)\eta^2 \sum_{\l\in\L}|a_{\l}|^2.
\]
where $C(\delta)$ is a positive constant depending only on $\delta$.
\end{lemma}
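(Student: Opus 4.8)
The plan is to reduce the estimate to a single-term bound $\|e_{\rho_\l}-e_\l\|^2_{L^2(S)}$ being comparable to $\eta^2$, and then to combine the terms by a Bessel-type argument rather than crude triangle inequality (which would lose the sharp dependence and not even converge well). First I would write $e_{\rho_\l}-e_\l = e_\l\,(e_{\rho_\l-\l}-1)$, so that, pointwise on $S\subseteq[0,1]$, $|e_{\rho_\l}(t)-e_\l(t)| = |e^{2\pi i(\rho_\l-\l)t}-1| \le 2\pi|\rho_\l-\l|\,|t| \le 2\pi\eta$, using $|e^{i\theta}-1|\le|\theta|$ and $|t|\le 1$. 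Thus each difference has $L^2(S)$-norm at most $2\pi\eta\sqrt{|S|}\le 2\pi\eta$. A naive application of the triangle inequality would then give $\big(\sum_\l |a_\l|\,2\pi\eta\big)^2$, which is not controlled by $\sum|a_\l|^2$ when $\L$ is infinite; so the main point is to do better.

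The key device is to view $\sum_\l a_\l(e_{\rho_\l}-e_\l)$ as an element built from \emph{two} exponential systems with frequencies $\{\rho_\l\}$ and $\{\l\}$. The perturbed set $\{\rho_\l\}$ is still uniformly discrete: since $|\rho_\l-\l|\le\eta\le\delta/3$, for $\l\ne\gamma$ we have $|\rho_\l-\rho_\gamma|\ge|\l-\gamma|-2\eta\ge\delta-2\delta/3=\delta/3$. Hence both $E(\{\l\})$ and $E(\{\rho_\l\})$ satisfy the Bessel bound of Lemma \ref{bessel lemma} with a constant depending only on $\delta$. The plan is then to estimate the norm by duality: for $f\in L^2(S)$ with $\|f\|=1$,
\[
\Big|\Big\langle \sum_\l a_\l(e_{\rho_\l}-e_\l),\,f\Big\rangle\Big|
= \Big|\sum_\l a_\l\big(\langle e_{\rho_\l},f\rangle - \langle e_\l,f\rangle\big)\Big|
\le \Big(\sum_\l|a_\l|^2\Big)^{1/2}\Big(\sum_\l|\widehat{f\cdot 1\!\!1_S}(\rho_\l)-\widehat{f\cdot 1\!\!1_S}(\l)|^2\Big)^{1/2}.
\]
So the problem is reduced to bounding $\sum_\l |F(\rho_\l)-F(\l)|^2$ by $C(\delta)\eta^2\|f\|_{L^2(S)}^2$, where $F=\widehat{f\cdot 1\!\!1_S}$, a function whose Fourier transform $f\cdot 1\!\!1_S$ is supported in $[0,1]$.

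The step I expect to be the main obstacle is precisely this last bound on the sampled increments $\sum_\l|F(\rho_\l)-F(\l)|^2$. I would handle it via the mean value theorem: $F(\rho_\l)-F(\l)=\int_\l^{\rho_\l}F'(s)\,ds$, and since $F=\widehat{f\cdot 1\!\!1_S}$ with $f\cdot 1\!\!1_S$ supported in $[0,1]$, we have $F'=\widehat{g}$ where $g(t)=(-2\pi i t)f(t)1\!\!1_S(t)$, so $\|g\|_{L^2}\le 2\pi\|f\|_{L^2(S)}$. By Cauchy--Schwarz, $|F(\rho_\l)-F(\l)|^2\le \eta\int_{I_\l}|F'(s)|^2\,ds$ where $I_\l$ is the interval between $\l$ and $\rho_\l$, of length $\le\eta$. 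Summing, and noting that since $\eta\le\delta/3$ the intervals $I_\l$ have bounded overlap (in fact one can enlarge each to a $\delta/3$-neighbourhood of $\l$ and get a fixed finite overlap, or simply note the $I_\l$ are pairwise disjoint for $\eta<\delta/2$), we obtain
\[
\sum_{\l\in\L}|F(\rho_\l)-F(\l)|^2 \le \eta^2 \int_{\R}|F'(s)|^2\,ds = \eta^2\|g\|_{L^2(\R)}^2 \le 4\pi^2\eta^2\|f\|_{L^2(S)}^2.
\]
Combining with the duality inequality above and taking the supremum over $\|f\|_{L^2(S)}=1$ yields $\big\|\sum_\l a_\l(e_{\rho_\l}-e_\l)\big\|_{L^2(S)}^2\le 4\pi^2\eta^2\sum_\l|a_\l|^2$, which is the claim with $C(\delta)=4\pi^2$ (so in fact the constant here can be taken independent of $\delta$; the dependence on $\delta$ is only needed if one routes the argument through Lemma \ref{bessel lemma} applied to $\{\rho_\l\}$, which is an alternative but less efficient path). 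I would present the mean-value-theorem route as the cleanest, remarking that the disjointness of the $I_\l$ is exactly where the hypothesis $\eta\le\delta/3$ enters.
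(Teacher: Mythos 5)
Your overall architecture is the same as the paper's: pass to the dual formulation, reduce to bounding $\sum_{\l}|F(\rho_\l)-F(\l)|^2$ for $F=\widehat{f 1\!\!1_S}$ with $\|f\|_{L^2(S)}=1$, and exploit that $F'$ is the Fourier transform of $-2\pi i t\, f(t)1\!\!1_S(t)$, hence $\|F'\|_{L^2(\R)}\le 2\pi$. The problem is in your final summation step. From your per-term bound $|F(\rho_\l)-F(\l)|^2\le \eta\int_{I_\l}|F'|^2$ and the pairwise disjointness of the intervals $I_\l$, the sum over $\l$ is bounded by $\eta\int_{\R}|F'|^2$, \emph{not} by $\eta^2\int_{\R}|F'|^2$: you have silently inserted an extra factor of $\eta$. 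What you have actually proved is the lemma with $\eta$ in place of $\eta^2$ (and an absolute constant), which is strictly weaker than the statement.

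This gap is not repairable within the disjointness framework, because your stronger conclusion --- the $\eta^2$ bound with the absolute constant $4\pi^2$ --- is false. Take $S=[0,1]$, $\L=\{0,\delta,2\delta,\dots,(N-1)\delta\}$ with $N\delta=1$, $\rho_\l=\l+\eta$ for all $\l$, and $a_\l=N^{-1/2}$. Then $\sum_\l a_\l(e_{\rho_\l}-e_\l)=(e_\eta-1)D$ with $D=N^{-1/2}\sum_k e_{k\delta}$ a normalized Dirichlet kernel, and a direct computation on $t\in[1/4,3/4]$ gives $\|\sum_\l a_\l(e_{\rho_\l}-e_\l)\|^2_{L^2[0,1]}\ge c\,\eta^2/\delta$, so the constant in the lemma must blow up as $\delta\to 0$. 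The way to recover the full factor $\eta^2$ is the paper's: apply the mean value theorem pointwise, $|F(\rho_\l)-F(\l)|\le\eta|F'(\xi_\l)|$ with $\xi_\l$ between $\l$ and $\rho_\l$, observe that $\{\xi_\l\}$ is uniformly discrete with separation constant $\delta/3$, and then bound $\sum_\l|F'(\xi_\l)|^2\le C(\delta)\|F'\|^2_{L^2(\R)}$ by Lemma \ref{bessel lemma} (a Plancherel--P\'olya type inequality, which is where the unavoidable $\delta$-dependence enters). Everything else in your write-up --- the duality reduction, the uniform discreteness of $\{\rho_\l\}$, the derivative computation --- matches the paper and is fine; it is only this last exchange of ``Bessel for the intermediate points'' with ``disjointness of the $I_\l$'' that loses the theorem.
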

\begin{proof}
We may assume that $\sum|a_{\l}|^2=1$. Denote
\[
H:=\sum_{\l\in\L}a_{\l}(e_{\rho_{\l}}-e_{\l}).
\]
The statement will follow if we show that for $f\in L^2(S)$ with $\|f\|_{L^2(S)}=1$ we have
\begin{equation}\label{needed for stability}
|\langle f,H\rangle_{ L^2(S)}|^2\leq C(\delta)\eta^2 .
\end{equation}
Note that the Fourier transform of any such $f$ is differentiable and moreover, since $S\subseteq [0,1]$, Parseval identity implies that $\|(\widehat{f})'\|_{L^2(\R)}^2\leq 4\pi^2$.

We apply the Cauchy-Shwartz inequality, and then the intermediate value theorem, to get
\[
\begin{aligned}
|\langle f,H\rangle_{ L^2(S)}|^2&\leq \sum_{\l\in\L}|\widehat{f}(\rho_\l)-\widehat{f}(\l)|^2\leq \eta^2\sum_{\l\in\L}|(\widehat{f})'(\xi_\l)|^2,
\end{aligned}
\]
where $\xi_\l$ is a point in the interval between $\l$ and $\rho_{\l}$. In particular, this implies that $\{\xi_{\l}\}$ is uniformly discrete with separation constant $\delta/3$. The estimate (\ref{needed for stability}) now follows by applying Lemma \ref{bessel lemma} and recalling that $\|(\widehat{f})'\|^2\leq 4\pi^2$.
\end{proof}

\subsection{A proof for Theorem  \ref{d-almos-unif-comp}, necessity}

\begin{proof}
Let $S\subseteq\R$, $\L\subseteq\R$, and $0< d< 1$ be as in Theorem \ref{d-almos-unif-comp}. Note that if $S_1\subseteq S$ and $E(\Lambda)$ is uniformly complete over $S$ then it is also uniformly complete over $S_1$. We may therefore assume that $S$ is bounded. Further, by a standard dilation argument, we may assume that $S\subseteq [0,1]$.

Let $\delta>0$ be the seperation constant of $\L$ and fix $0<\eta\leq \min\{\delta/3,1\}$. We restrict our attention to the interval $I_{\eta}:=[0,1/\eta]$ and keep in mind that the normalized system $\sqrt{\eta}E(\eta\Z)$ forms an orthonormal basis in $L^2(I_{\eta})$.

For $\lambda\in\Lambda$, pick $\rho_{\lambda}\in \eta\Z$ so that $|\lambda-\rho_{\lambda}|\leq \eta$. Note that the numbers $\rho_{\lambda}$ are distinct and denote
\[
\L_{\eta}:=\{\rho_{\lambda}:\:\lambda\in\Lambda\}\qquad \Gamma_{\eta}=\eta\Z-\Lambda_{\eta}.
\]

 We claim that for an appropriate choice of $d_{\eta}$, the system $E(\Gamma_{\eta})$ is $d_{\eta}$ approximately uniformly minimal over $I_{\eta}\setminus S$. For this, let $k\in\Z$ be such that $\eta k\in\Gamma_{\eta}$. The $d$-approximate uniform completeness of $E(\L)$ over $S$ implies that there exist a sequence $\{a_{\lambda}(k)\}\in\ell^2(\L)$ and a function $\phi_k$ supported on $S$, so that
\begin{equation}\label{decomp of etak}
e_{\eta k}=\sum_{\lambda\in\Lambda}a_{\lambda}(k)e_{\lambda}+\phi_k\qquad \textrm{over } S,
\end{equation}
\begin{equation}\label{bound for phik}
\|\phi_k\|_{L^2(I_{\eta})}=\|\phi_k\|_{L^2(S)}\leq d\sqrt{|S|}
\end{equation}
and
\begin{equation}\label{bound for coef}
\sum_{\lambda\in\Lambda}|a_{\lambda}(k)|^2\leq C,
\end{equation}
where $C$ is a constant which does not depend on $k$  or  on $\eta$.

Denote
\[
h_k=1\!\!1_S\sum_{\lambda\in\Lambda}a_{\lambda}(k)\big(e_{\rho_{\lambda}}-e_{\lambda}\big).
\]
and note that due to Lemma \ref{stability} we have
\begin{equation}\label{norm of hk}
\|h_k\|^2_{L^2(I_{\eta})}= \|h_k\|^2_{L^2(S)}\leq C\eta^2.
\end{equation}
Next, define
\[
g_k=\eta1\!\!1_{I_{\eta}}\big[e_{\eta k}-\sum_{\lambda\in\Lambda}a_{\lambda}(k)e_{\rho_{\lambda}}+h_k-\phi_k\big].
\]

Our goal is to show that the family $\{g_k\}$ satisfies the requirements of Definition \ref{app unif min} with respect to the system $E(\Gamma_{\eta})$ and the set $I_{\eta}\setminus S$. We first note that due to (\ref{decomp of etak}) we have $g_k=0$ almost everywhere on $S$.

Next, we estimate the norm of $g_k$. Keeping in mind that $\sqrt{\eta}E(\eta\Z)$ is an orthonormal basis over $I_{\eta}$, we find that
\[
\begin{aligned}
\|g_k\|_{L^2(I_{\eta}\setminus S)}&=\|g_k\|_{L^2(I_{\eta})}\\
&\leq \sqrt{\eta}\big(1+\sum_{\lambda\in\Lambda}|a_{\lambda}(k)|^2\big)^{1/2}+\eta(\|\phi_k\|_{L^2(I_{\eta})}+\|h_k\|_{L^2(I_{\eta})}).
\end{aligned}
\]
Applying the estimates (\ref{bound for phik}), (\ref{bound for coef}) and (\ref {norm of hk}) we conclude that $\|g_k\|<C$, for some positive constant $C$ which does not depend on $k$.

Lastly, we evaluate the approximation (\ref{unif-min-up-to-d}) for the functions $g_k$. For $\ell\in\Z$ so that $\eta\ell\in\Gamma_{\eta}$ we have
\[
\begin{aligned}
\langle g_k, e_{\eta\ell}\rangle_{L^2(I_{\eta}\setminus S)}&=\langle g_k, e_{\eta\ell}\rangle_{L^2(I_{\eta})}\\
& =\delta_{\eta k}(\eta\ell)+\eta\langle h_k-\phi_k, e_{\eta\ell}\rangle_{L^2(I_{\eta})},
\end{aligned}
\]
and so
\[
\begin{aligned}
\sum_{\eta\ell\in\Gamma_{\eta}}|\langle g_k, e_{\eta\ell}\rangle_{L^2(I_{\eta}\setminus S)} -\delta_{\eta k}(\eta\ell)|^2&=\eta\sum_{\eta\ell\in\Gamma_{\eta}}|\langle h_k-\phi_k, \sqrt{\eta}e_{\ell\eta}\rangle_{L^2(I_{\eta})}|^2,\\
&\leq \eta\sum_{\ell\in\Z}|\langle h_k-\phi_k, \sqrt{\eta}e_{\eta\ell}\rangle_{L^2(I_{\eta})}|^2\\
&= \eta\|h_k-\phi_k\|_{L^2(I_{\eta})}^2\leq \eta\big(\|h_k\|_{L^2(I_{\eta})}+\|\phi_k\|_{L^2(I_{\eta})}\big)^2.
\end{aligned}
\]
This, combined with the estimates (\ref{bound for phik}) and (\ref {norm of hk}) implies that 
\[
\|\widehat{g}_k -\delta_{\eta k}\|_{\ell^2(\Gamma_{\eta})}\leq d_{\eta} 
\]
where
\[
d_{\eta}:=\sqrt{\eta}(d\sqrt{|S|}+\sqrt{C}\eta).
\]
We conclude that $E(\Gamma_{\eta})$ is $d_{\eta}$-approximately uniformly minimal in $L^2(I_{\eta}\setminus S)$.

We may now apply Theorem \ref{d-almos-unif-min} to get
\[
(1-d_{\eta}^2)D^+(\Gamma_{\eta})\leq \frac{1}{\eta}-|S|.
\]
Since $\Gamma_{\eta}\cup\Lambda_{\eta}=\eta\Z$ we have $D^+(\Gamma_{\eta})=1/\eta-D^-(\Lambda_{\eta})$. Moreover, as $\Lambda_{\eta}$ is a small perturbation of $\L$, we also have $D^-(\Lambda_{\eta})=D^-(\Lambda)$. Consequently, we get
\[
(1-d_{\eta}^2)\Big(\frac{1}{\eta}-D^-(\Lambda)\Big)\leq \frac{1}{\eta}-|S|.
\]
Rearranging, and inserting the expression for $d_{\eta}$, we obtain
\[
\begin{aligned}
|S|&\leq (1-d_{\eta}^2)D^-(\Lambda)+\frac{d_{\eta}^2}{\eta}\\
&\leq (1-d_{\eta}^2)D^-(\Lambda)+(d\sqrt{|S|}+\sqrt{C}\eta)^2
\end{aligned}
\]
Letting $\eta$ tend to zero, and noting that $d_{\eta}$ tends to zero as $\eta$ does, we get
\[
\begin{aligned}
(1-d^2)|S|\leq D^-(\Lambda).
\end{aligned}
\]
This completes the proof.
\end{proof}

\begin{remark}\label{d=0}
Theorem \ref{thm unif complete} follows from Theorem \ref{d-almos-unif-comp} by applying the latter theorem for all $d>0$, and then letting $d$ tend to zero.
\end{remark}

\subsection{A proof for Theorem  \ref{d-almos-unif-comp}, sharpness}
We now show that Theorem  \ref{d-almos-unif-comp} is sharp. Given $0< d< 1$ and $\epsilon>0$ we construct a set $S$ so that $E(\Z)$ is $d$-approximately uniformly complete in $L^2(S)$ and $1-\epsilon\leq {|S|}({1-d^2})$. This observation was obtained jointly with Rohit Pai.

For $n\in\N$  let
\[
\alpha_n=\frac{d^2}{n(1-d^2)+1},
\]
and fix $n$ large enough so that $\alpha_n\leq \epsilon$.

Denote
\[
S:=[0,1]\cup \big(\cup_{j=1}^{n}[j,j+\alpha_n]\big),
\]
and note that
\begin{equation}\label{|S| and d2|S|}
|S|=1+n\alpha_n,\qquad\textrm{and}\qquad d^2|S|=(n+1)\alpha_n.
\end{equation}

We claim that $E(\Z)$ is $d$-approximately uniformly complete in $L^2(S)$. To see this, given $x\in\R$ consider the $1$-periodic function $\phi_x$ which satisfies $\phi_x=0$ on $[0,\alpha_n]$ and $\phi_x=e_x$ on $[\alpha_n,1]$. The Fourier decomposition of $\phi_x$ over $[0,1]$,
\begin{equation}\label{fourier of phi}
\phi_x=\sum_{n=-\infty}^{\infty} a_n(x)e_n,
\end{equation}
satisfies  
\[
\sum_{n=-\infty}^{\infty} |a_n(x)|^2=\|\phi_x\|^2_{L^2[0,1]}\leq 1.
\]

As $\phi_x$ is $1$-periodic, the series on the right hand side of (\ref{fourier of phi}) converges to $\phi_x$ in $L^2(S)$. This implies that the series is equal to $e_x$ on $[\alpha_n,1]$ and equal to zero on $S\setminus [\alpha_n,1]$. By  (\ref{|S| and d2|S|}) we have
\[
\begin{aligned}
\Big\|e_x-\sum_{n=-\infty}^{\infty} a_n(x)e_n\Big\|^2_{L^2(S)}=(n+1)\alpha_n=d^2|S|.
\end{aligned}
\]
It follows that $E(\Z)$ is $d$-approximately uniformly complete in $L^2(S)$. Finally, we observe that by (\ref{|S| and d2|S|}) we have
\[
\begin{aligned}
{|S|}({1-d^2})&=|S|-d^2|S|=1-\alpha_n\geq 1-\epsilon.\\
\end{aligned}
\]
This completes the proof.

\section{Acknowledgment} The author thanks Rohit Pai for the fruitful conversation which resulted in the proof that Theorem  \ref{d-almos-unif-comp} is sharp.

\bibliographystyle{amsplain}

\begin{thebibliography}{10}

\bibitem{ACNS} A.~Ahn, W.~ Clark, S.~Nitzan, and J.~Sullivan, \textit{Density of Gabor systems via the short time Fourier transform} Journal of Fourier Analysis and Applications,
(2018), 24, 699-718.

\bibitem{Lan64} H.~J~.Landau, \textit{A sparse regular sequence of exponentials closed on large sets}, Bull. Amer. Math. Soc.,  (1964), 70(6), 566-569.

\bibitem{Lan1967} H.~J.~Landau, \textit{Necessary density conditions for sampling and interpolation of certain entire functions}. Acta Mathematica, (1967), 117(1), 37-52.

\bibitem{MM2010} B.~Matei and Y.~Meyer, \textit{Simple quasicrystals are sets of stable sampling}, Complex Variables and Elliptic Equations, (2010), 55(8-10), 947-964.

\bibitem{NO2007} S.~ Nitzan and A.~Olevskii, \textit{Sparse exponential systems: completeness with estimates} Israel Journal of Mathematics, (2007), 158, 205-215.

\bibitem{NO2012} S.~Nitzan and A.~Olevskii, \textit{Revisiting Landauʼs density theorems for Paley–Wiener spaces}. Comptes Rendus. Mathematique,  (2012), 350(9-10), 509-512.

\bibitem{NP} S.~ Nitzan and R.~Pai, \textit{On uniform minimality and 'uniform completeness' of Gabor elements and reproducing kernals}, tentative title, work in final stages of preparation.

\bibitem{OU2010} A.~Olevskii and A.~Ulanovskii, \textit{Approximation of discrete functions and size of spectrum}, St. Petersburg Math J., (2010), 21(6), 1015-1025.

\bibitem{OUbook} A.~Olevskii and A.~Ulanovskii, \textit{Functions with disconnected spectrum},  American Mathematical Soc.,  (2016),  Vol. 65.

\bibitem{RS1997} A.~Ron, and Z.~Shen, \textit{Weyl-Heisenberg frames and Riesz bases in $L^2(\R^d)$}, Duke Math. J., (1997), 89, 237-- 282.

\bibitem{Sbook} K.~Seip, \textit{Interpolation and sampling in spaces of analytic functions}, American Mathematical Soc., (2004), Vol 33.



\end{thebibliography}

\def\cprime{$'$} \def\cprime{$'$} \def\cprime{$'$} \def\cprime{$'$}
\providecommand{\bysame}{\leavevmode\hbox to3em{\hrulefill}\thinspace}
\providecommand{\MR}{\relax\ifhmode\unskip\space\fi MR }
\providecommand{\MRhref}[2]{%
  \href{http://www.ams.org/mathscinet-getitem?mr=#1}{#2}
}
\providecommand{\href}[2]{#2}

\end{document}